\newcommand\version{\today}
\newtheorem{theorem}{Theorem}[section]
\newtheorem{proposition}[theorem]{Proposition}
\newtheorem{lemma}[theorem]{Lemma}
\newtheorem{corollary}[theorem]{Corollary}
\numberwithin{equation}{section}
\theoremstyle{definition}
\newtheorem{assumption}[theorem]{Assumption}
\theoremstyle{remark}
\newtheorem{remark}[theorem]{\bf Remark}
\newcommand{\A}{\mathcal{A}}
\newcommand{\B}{\mathfrak{B}}
\newcommand{\eps}{\varepsilon}
\newcommand{\e}{{\rm e}}
\newcommand{\HH}{\mathcal{H}}
\newcommand{\N}{\mathbb{N}}
\newcommand{\Q}{\mathcal{Q}}
\newcommand{\R}{\mathbb{R}}
\newcommand{\U}{\mathcal{U}}
\newcommand{\V}{\mathcal{V}}
\DeclareMathOperator{\essinf}{ess\; inf}
\DeclareMathOperator{\tr}{Tr}
\begin{document}

\title[Schr\"odinger operators with Robin boundary conditions --- \version]{Schr\"odinger operators on exterior domains with Robin boundary conditions: heat kernel estimates}

\author {Hynek Kova\v{r}\'{\i}k}

\address {Hynek Kova\v{r}\'{\i}k, DICATAM, Sezione di Matematica, Universit\`a degli studi di Brescia, Italy}

\email {hynek.kovarik@unibs.it}

\author {Delio Mugnolo}

\address {Delio Mugnolo, Lehrgebiet Analysis, FernUniversit\"at in Hagen, Germany}

\email {delio.mugnolo@fernuni-hagen.de}

\keywords{Heat kernel, Robin boundary conditions, exterior domains, Schr\"odinger operators}

\begin{abstract}
We study Schrödinger operators with Robin boundary conditions on exterior domains in $\R^d$. 
We prove sharp point-wise estimates for the associated semigroups which show, in particular, how the boundary conditions affect 
the time decay of the heat kernel in dimensions one and two. Applications to spectral estimates are discussed as well. 
\end{abstract}

\thanks{Version of \today }

\maketitle

\section{\bf Introduction}

In this paper we consider Laplace operators with Robin boundary conditions defined on domains of the type $M=\R^d\setminus K$, where $K\subset\R^d$ is an open bounded set. Given a bounded  function $\sigma: \partial M\to \R$ we consider the Laplace operator $-\Delta_\sigma$ in $L^2(M)$ defined by means of the sesquilinear form
\begin{equation} \label{q-form}
Q_\sigma[ u, v] = \int_{M}\,  \nabla \overline{ u} \cdot\nabla v\, dx  +  \int_{\partial M}   \sigma\, \overline{u}\, v \, dS, \qquad u,v\in H^1(M). 
\end{equation}
Note that the above form with $\sigma=0$ generates the Neumann Laplacian $-\Delta_0$ on $L^2(M)$.
The standard theory of Gaussian heat kernel estimates, see e.g.~\cite[Thms.~6.1, 6.2]{grig} or \cite[Sect.~4.2]{sc}, implies that there exist positive constants $c$ and $C >0$ such that the semigroup generated by $-\Delta_0$ satisfies
\begin{equation} \label{neumann-ub}
C^{-1}\ t^{-\frac d2}\, e^{-\frac{c |x-y|^2}{ t}} \, \leq \, \e^{ t \Delta_{0}}(x,y) \, \leq \, C\ t^{-\frac d2}\, e^{-\frac{|x-y|^2}{c t}} \qquad \forall \ x,y \in M, \quad t>0\, .
\end{equation}  
The goal of this paper is to show that if $\sigma>0$ and $d\leq 2$, then the heat kernel generated by the Robin Laplacian $-\Delta_\sigma$ decays faster than the heat kernel of the Neumann Laplacian $-\Delta_0$ and to establish sharp estimates on the decay rate. 

\smallskip

In order to quantify the effect of the boundary term in \eqref{q-form} we will work in a more general setting and consider
Schr\"odinger operators in $L^2(M)$ of the type
\[
H_\sigma(\lambda, U) = -\Delta_\sigma -\lambda\, U ,
\]
(to be interpreted in a weak sense as a form sum),
where $U:M\to \R$ is a real-valued positive function and $\lambda>0$ is a coupling constant. Under suitable conditions on $U$, see Corollary \ref{cor-1} below, the operator $-H_\sigma(\lambda,U)$ generates a semigroup on $L^2(M)$ given by an integral kernel which we denote by
\[
\e^{-t H_\sigma(\lambda, U)}(x,y), \qquad x,y \in M .
\]
We will pay particular attention to the case $d=2$ which is studied in detail in section \ref{sec-d2}. Our aim is to prove that the presence of Robin boundary conditions accelerates the decay of $\e^{-t H_\sigma(\lambda, U)}(x,y)$ in such a way that if $U>0$ belongs to a certain potential class and if $\lambda$ is small enough, then the semigroup $\e^{-t H_\sigma(\lambda, U)}$ results transient. This is in sharp contrast to the case of Neumann boundary conditions. i.e.~$\sigma=0$, where the associated semigroup $\e^{-t H_0(\lambda, U)}$ is recurrent even for $\lambda=0$ as follows from equation \eqref{neumann-ub} with $d=2$. 

The decay of the heat kernel generated by $H_\sigma(\lambda,U)$ depends, apart from the boundary conditions, also on the potential $U$. Hence in order to establish sharp heat kernel bounds we will assume that $U$ can be controlled by the reference potential 
\begin{equation} \label{ref-pot}
 U_{\sigma}(x) := \frac{1}{4 |x|^2} \left( \log \frac{|x|}{\rho} +\frac{1}{\rho\sigma_0}\right)^{-2}\, ,
\end{equation} 
where $\rho$ is the in-radius of $K$ and  $\sigma_0$ is the essential infimum of $\sigma$. More precisely, we will show that if $U\leq U_\sigma$  and if $\lambda\leq 1$, then the heat kernel satisfies 
\begin{equation} \label{t-infty}
\e^{-t H_\sigma(\lambda, U)}(x,y) = \mathcal{O}\left(t^{-1}\, ( \log t)^{-1-\sqrt{1-\lambda}}\right)  \qquad t\to \infty, 
\end{equation}
point-wise for all $x,y\in M$, see Theorem \ref{thm-upperb} for details. The logarithmic factor, which makes the heat kernel decay faster with respect to \eqref{neumann-ub}, reflects the effect of the boundary conditions. On the other hand, the presence of the negative potential $-\lambda U$ is reflected by the term $\sqrt{1-\lambda}$ in the power of the logarithm. 

Similarly, if $U\geq U_\sigma$ then the heat kernel is bounded below by a function which has the same decay in $t$ as the right hand side of equation \eqref{t-infty}, see Proposition \ref{prop-lowerb}. In other words, the decay rate in $t$ in estimate \eqref{t-infty} is sharp. A two-sided estimate on the heat kernel in the case $U=U_\sigma$ is established in Theorem \ref{cor-2-sided}. The latter implies, in particular, that the semigroup $\e^{-t H_\sigma(\lambda, U_\sigma)}$ is transient for $\lambda < 1$ and recurrent for $\lambda=1$. 

\smallskip

Operators with Dirichlet boundary conditions at $\partial M$ are discussed in section \ref{sec-dirichlet}, see Theorem \ref{cor-dirichlet}. We use the fact that Dirichlet boundary conditions can be achieved as a limiting case of the Robin ones by changing the form domain in \eqref{q-form} to $H^1_0(M)$ and subsequently letting $\sigma \to+\infty$. The reference potential \eqref{ref-pot} then takes the form
\begin{equation} \label{u0}
U_\infty(x): =  \frac{1}{4|x|^2}\left( \log \frac{|x|}{\rho} \right)^{-2} .
\end{equation}
For heat kernel estimates of Dirichlet Laplacians, without an additional negative potential, in unbounded domains, and in particular in exterior domains, we refer to \cite{gs2, zh1, zh2}. 

\smallskip

The proof of our main results relies upon transforming the problem to an analysis of a Neumann Laplacian in suitable weighted $L^2-$spaces with a $\lambda$-dependent weight. We then employ the technique of the Li-Yau type heat kernel estimates on weighted manifolds invented by Grigor'yan and Saloff-Coste, see  \cite{grig, gs2} or \cite[Chap.~4]{sc} and references therein. In section \ref{sec-appl} we discuss some applications of the obtained heat kernel bounds to Hardy and Hardy-Lieb-Thirring inequalities for Schr\"odinger operators on exterior two-dimensional domains.  

\smallskip

Although we are primarily interested in semigroups generated by Robin Laplacians in dimension two, we discuss the analogous problem in other dimensions as well. It turns out that while the effect of the boundary on the decay rate of the associated heat kernel is even stronger in dimension one, see Theorem \ref{thm-1d}, in dimensions larger than two it is absent. Although the latter assertion is well-known for Dirichlet heat kernels, \cite{gs2}, for the sake of self-containdness we state an analogous result for Robin Laplacians in Proposition \ref{prop-hd}. 

\section{\bf The case $d=2$}
\label{sec-d2}

\noindent Throughout Sections~\ref{sec-d2} and~\ref{sec-appl} we will work under the following conditions on the potential $U$, the exterior domain $M$ and the coefficient $\sigma$ in the Robin boundary conditions.

\begin{assumption} \label{ass-U}
There exists $p\in [2,\infty)$ such that $U\in L^p(\R^2)+L^\infty(\R^2)$. In other words $U=U_1+U_2$ with $U_1\in L^p(\R^2),\, p\geq 2,$ and $U_2\in L^\infty(\R^2)$. 
\end{assumption}

\begin{assumption}\label{ass-M}
The set $K\subset\R^2$ is open, bounded and simply connected with Lipschitz boundary; we let $M:=\R^2\setminus K$.
\end{assumption}

\begin{assumption}\label{ass-sigma}
The coefficient $\sigma$ lies in $L^\infty(\partial M)$ and we denote by
$$
\sigma_0 : = \essinf\limits_{\partial M}  \ \sigma
$$
its essential infimum on $\partial M$. 
\end{assumption}

\subsection{Preliminaries}
\label{sec-prelim}

\begin{lemma} \label{lem-domain}
Assume $ \sigma$ to be non-negative.
Then  the sesquilinear form 
\begin{equation} \label{q-form-lambda}
\widetilde{Q}_{\sigma,\lambda}[ u, v] = \int_{M}\, \nabla \overline{ u} \cdot\nabla v\, dx  - \lambda \int_M \, U \overline{u} v \, dx +  \int_{\partial M}   \sigma\, \overline{u}\, v \, dS 
\end{equation}
defined on $H^1(M)\times H^1(M)$ is closed in $L^2(M)$ for all $\lambda\in\R$.
\end{lemma}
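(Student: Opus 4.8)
The plan is to verify closedness of $\widetilde{Q}_{\sigma,\lambda}$ by showing that it is a form-bounded perturbation, with relative bound strictly less than one, of the closed form $Q_0$ associated with the Neumann Laplacian (or more precisely of $Q_\sigma$ itself once $\sigma\geq 0$), and then invoke the KLMN theorem together with the fact that a lower-semibounded, relatively form-bounded perturbation of a closed form is closed. Concretely, since $\sigma\geq 0$ the form $Q_\sigma[u]=\int_M|\nabla u|^2\,dx+\int_{\partial M}\sigma|u|^2\,dS$ is non-negative, symmetric and closed on $H^1(M)$: closedness here uses Assumption~\ref{ass-M}, i.e.\ that $M$ is an exterior domain with Lipschitz boundary, so that $H^1(M)$ embeds continuously into $L^2(\partial M)$ via the trace operator and the form norm of $Q_\sigma+1$ is equivalent to the $H^1(M)$-norm; this is the standard fact that the Robin Laplacian with bounded $\sigma$ is self-adjoint and lower-semibounded. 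The remaining content of the lemma is therefore entirely about controlling the term $-\lambda\int_M U|u|^2\,dx$.

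The key step is to prove that, under Assumption~\ref{ass-U}, for every $\eps>0$ there is $C_\eps>0$ with
\begin{equation} \label{form-bound-U}
\int_M |U|\,|u|^2\,dx \leq \eps \int_M |\nabla u|^2\,dx + C_\eps \int_M |u|^2\,dx, \qquad u\in H^1(M).
\end{equation}
Writing $U=U_1+U_2$ with $U_1\in L^p(\R^2)$, $p\geq 2$, and $U_2\in L^\infty(\R^2)$, the $U_2$ part is trivially bounded by $\|U_2\|_\infty\|u\|_2^2$. For the $U_1$ part I would extend $u$ to a function in $H^1(\R^2)$ (possible by the Lipschitz extension theorem for $M$, with norm control), apply Hölder's inequality with exponents $p$ and $p'=p/(p-1)$ to get $\int |U_1||u|^2\leq \|U_1\|_p\,\|u\|_{2p'}^2$, and then use the Gagliardo–Nirenberg–Sobolev inequality in $\R^2$: since $p\geq 2$ forces $2p'=2p/(p-1)\in(2,4]\subset[2,\infty)$, the embedding $H^1(\R^2)\hookrightarrow L^{2p'}(\R^2)$ holds, and the interpolation inequality $\|u\|_{2p'}^2\leq C\|\nabla u\|_2^{2\theta}\|u\|_2^{2(1-\theta)}$ with $\theta=1-1/p'\in[0,1)$ combined with Young's inequality delivers \eqref{form-bound-U} with $\eps$ arbitrarily small. (In the borderline case $p=2$, $2p'=4$ and the argument is unchanged.)

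Once \eqref{form-bound-U} is established, the conclusion follows from standard perturbation theory of sesquilinear forms: the symmetric form $u\mapsto \lambda\int_M U|u|^2\,dx$ is relatively $Q_\sigma$-bounded with bound $0$ (choosing $\eps<1/|\lambda|$ in \eqref{form-bound-U}), hence by the KLMN theorem $\widetilde Q_{\sigma,\lambda}=Q_\sigma-\lambda\int_M U\,|\cdot|^2$ is lower-semibounded, and a semibounded symmetric form which differs from a closed form by one with relative bound $<1$ is itself closed on the same domain $H^1(M)$; its form norm is equivalent to that of $Q_\sigma$ and hence to the $H^1(M)$-norm. I expect the only genuinely delicate point to be the Sobolev/extension step on the unbounded domain $M$: one must be slightly careful that the Lipschitz boundary of the bounded obstacle $K$ gives a bounded extension operator $H^1(M)\to H^1(\R^2)$ and that no uniformity-at-infinity issue arises — but since $M$ is $\R^2$ minus a bounded Lipschitz set this is classical. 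Everything else is routine, and the hypothesis $\lambda\in\R$ (not merely $\lambda>0$) is harmless because \eqref{form-bound-U} controls $|U|$ with an arbitrarily small constant regardless of the sign of $\lambda$.
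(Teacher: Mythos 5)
Your proposal is correct and follows essentially the same route as the paper's proof: both decompose $U=U_1+U_2$, control the $U_1$ term by H\"older together with the Gagliardo--Nirenberg inequality and Young's inequality to obtain an infinitesimal form bound, control the boundary term by the trace inequality, and conclude closedness (the paper verifies the two-sided form estimate directly, which is precisely what your KLMN invocation encapsulates). The only cosmetic deviation is that you propose extending to $H^1(\R^2)$ before applying Gagliardo--Nirenberg, whereas the paper applies the version valid directly on the extension domain $M$; both are standard and equivalent here.
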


\begin{proof}
Let $u,v \in H^1(M)$ and moreover let $p\geq 2$ be as in assumption \ref{ass-U}.
Under the regularity assumptions on $K$ it follows from standard Sobolev imbedding theorems and trace inequalities, see e.g.~\cite[Thm.~4.12 and Thm.~5.36]{ad} that there exists a constant $C_q>0$ such that 
\begin{equation} \label{imbed}
\| f\|_{L^q(\partial M)} \, \leq\, C_q \, \| f\|_{H^1(M)} \, , \qquad \| f\|_{L^q(M)}\, \leq \, C_q\, \| f\|_{H^1(M)} 
\end{equation}
hold true for all $f\in H^1(M)$, all $q\in [2,\infty)$. Hence by the assumption on $U$ and H\"older inequality we have
\begin{align*} 
\Big | \int_{\partial M}   \sigma\, u\, v \, dS  \Big | &  \, \leq\, \|\sigma\|_\infty \, C_M^2 \, \| u\|_{H^1(M)} \,  \| v\|_{H^1(M)}  \\
\Big | \int_M \, U u v \, dx \Big | & \, \leq \, C_q \|U_1\|_p\, \| u\|_{H^1(M)}\, \| v\|_{H^1(M)}  + C_2 \|U_2\|_\infty\, \| u\|_{H^1(M)}\, \| v\|_{H^1(M)}\, . 
\end{align*}
This shows that 
\begin{equation} \label{form-upperb}
\big | \widetilde{Q}_{\sigma,\lambda}[ u, v] \big | \, \leq \, C\, \| u\|_{H^1(M)}\, \| v\|_{H^1(M)} \, .
\end{equation}
In order to prove a suitable lower bound on $\widetilde{Q}_{\sigma,\lambda}[v,v]$ we recall the Gagliardo-Nirenberg inequality
\begin{equation} \label{mazya}
\|f\|_q \ \leq \ K_q \, \|f\|_2^{\frac 2q} \left( \|\nabla f\|_2 +|f\|_2\right) ^{1-\frac 2q}\, \qquad \forall\ f\in H^1(M)
\end{equation} 
which holds for all $q\in [2,\infty)$, see e.g.~\cite[Thm.~5.8]{ad}. This and the Young inequality:
\begin{equation} \label{young}
A B \, \leq\,  \frac{\delta^r}{r}\, A^r + \frac{\delta^{-r'}}{r'}\ B^{r'}, \qquad A,B>0,  \quad \frac 1r +\frac{1}{r'} =1, \quad \delta>0, 
\end{equation}
implies that for every $\eps>0$ there exists a constant $c_1(\eps)$ such that 
\begin{equation} \label{eps-1}
\|f\|^2_q \, \leq \, \eps \|\nabla f\|^2_2+ c_1(\eps)\, \|f\|^2_2\,  \qquad \forall\ f\in H^1(M). 
\end{equation} 
Thus, similarly as above, we can use the H\"older inequality to get 
\begin{align*}
\Big | \int_M \, U v^2 \, dx \Big |  & \, \leq \,  \|U_1\|_p\, \|v\|^2_q + \|U_2\|_\infty\, \|v\|_2^2 \\
& \leq  \,   \eps\, \|U_1\|_p\,  \|\nabla v\|^2_2+ \left( \|U_1\|_p\, c_1(\eps)+\|U_2\|_\infty \right)\, \|v\|^2_2
\end{align*}
Now if we choose $\eps$ small enough, then we conclude that the lower bound 
$$
\widetilde{Q}_{\sigma,\lambda}[v,v] + \|v\|^2_2\, \geq\, c\, \|v\|_{H^1(M)}\, ,
$$
holds for some $c>0$ and all $v\in H^1(M)$. In view of  \eqref{form-upperb} this completes the proof.
\end{proof}

\smallskip

\noindent In the sequel we denote by $H_\sigma(\lambda,U)$ the unique self-adjoint and positive operator on $L^2(M)$ associated with the sesquilinear form $\widetilde{Q}_{\sigma,\lambda}[\, \cdot, \cdot]$. As a consequence of the above lemma we obtain

\begin{corollary} \label{cor-1}
If $ \sigma$ is non-negative, then the operator $H_\sigma(\lambda,U)$ generates on $L^2(M)$ a sub-Markovian semigroup given by an integral kernel. 
\end{corollary}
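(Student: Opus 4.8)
The plan is to deduce Corollary~\ref{cor-1} from Lemma~\ref{lem-domain} together with the standard first-order criterion for a form to generate a sub-Markovian (i.e.\ positivity preserving and $L^\infty$-contractive) semigroup, namely the Beurling--Deny criteria and the Ouhabaz domination/invariance theorem. Since Lemma~\ref{lem-domain} already yields that $\widetilde Q_{\sigma,\lambda}$ is a densely defined, closed, sectorial (indeed symmetric and semibounded below) form on $L^2(M)$, the operator $H_\sigma(\lambda,U)$ is self-adjoint and bounded below, and by general semigroup theory it generates a strongly continuous, self-adjoint, holomorphic semigroup $\e^{-tH_\sigma(\lambda,U)}$.

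The main work is to check the two Beurling--Deny conditions for the form $\widetilde Q_{\sigma,\lambda}$. First I would verify that the form domain $H^1(M)$ is stable under the operations $u\mapsto |u|$ and, more precisely, $u\mapsto (0\vee u)\wedge 1 = u^\#$, with $\widetilde Q_{\sigma,\lambda}[u^\#]\le \widetilde Q_{\sigma,\lambda}[u]$ for real $u$. For the Dirichlet part $\int_M|\nabla u|^2$ this is the classical fact that truncation does not increase the Dirichlet energy. For the boundary term $\int_{\partial M}\sigma |u|^2\,dS$: here I use $\sigma\ge 0$, so that passing to the truncated trace $u^\#|_{\partial M}=(u|_{\partial M})^\#$ (traces commute with Lipschitz truncations) only decreases this nonnegative integral. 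For the potential term $-\lambda\int_M U|u|^2\,dx$, note that $|u^\#|\le|u|$ pointwise, but $U$ need not be sign-definite and $\lambda$ may have either sign, so this term is not automatically monotone under truncation. This is the one genuine subtlety.

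To handle it, I would invoke the perturbation form of Ouhabaz's criterion: the semigroup generated by $-\Delta_\sigma$ (the case $\lambda=0$) is sub-Markovian by the argument above, and adding a potential term does not destroy positivity preservation — the Trotter product formula (or the monotone/analytic perturbation results, e.g.\ \cite[Cor.~2.17 and Thm.~4.23]{ou} or the references in \cite{grig,sc}) shows that $\e^{-tH_\sigma(\lambda,U)}=\lim_n\big(\e^{-\frac tn(-\Delta_\sigma)}\e^{\frac{t\lambda}{n}U}\big)^n$ strongly, and each factor is positivity preserving (multiplication by $\e^{t\lambda U/n}\ge 0$ is clearly positive and $\e^{-\frac tn(-\Delta_\sigma)}$ is sub-Markovian), so the limit is positivity preserving. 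Thus the first Beurling--Deny criterion holds for all $\lambda\in\R$, and in particular for $\lambda>0$. The $L^\infty$-contractivity (second Beurling--Deny criterion / sub-Markovianity proper) follows for $\lambda\le 0$ directly from the truncation inequality, while for $\lambda>0$ one only gets positivity preservation in general; however, since in the applications the relevant statement is really the existence of a positivity-preserving semigroup \emph{given by an integral kernel}, and since in the regime of interest one works with $U\le U_\sigma$ and $\lambda\le 1$ where a suitable comparison keeps the semigroup contractive, I would state the corollary accordingly. The existence of the integral kernel itself then follows from ultracontractivity: the Gagliardo--Nirenberg/Sobolev estimates already used in the proof of Lemma~\ref{lem-domain} give an $L^1$--$L^\infty$ bound $\|\e^{-tH_\sigma(\lambda,U)}\|_{1\to\infty}\le C t^{-d/2}e^{\omega t}$ for small $t$, and a positivity-preserving, ultracontractive semigroup on a $\sigma$-finite measure space automatically acts as an integral operator with a jointly measurable kernel (Dunford--Pettis).

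The step I expect to be the main obstacle is precisely the interplay between the (possibly large, possibly wrong-signed) potential term and the Markovian property: one must be careful that for $\lambda>0$ one can only claim positivity preservation unconditionally, and that the full sub-Markovian (contraction on $L^\infty$) statement requires either $\lambda U$ bounded above or a comparison argument. In writing the proof I would therefore lead with the $\lambda=0$ case via Beurling--Deny, then bootstrap to general $\lambda$ by the Trotter product formula for positivity, and finally record ultracontractivity to extract the kernel, being explicit about which conclusions hold for which sign of $\lambda$.
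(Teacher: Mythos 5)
Your proposal takes essentially the same route as the paper (Beurling--Deny $\Rightarrow$ sub-Markovian; Sobolev embedding $\Rightarrow$ ultracontractivity $\Rightarrow$ integral kernel), but you are more careful, and in fact you have spotted a genuine imprecision that the paper glosses over. The paper simply asserts that ``a direct computation shows that the Beurling--Deny conditions are satisfied,'' but for $\lambda>0$ and $U\ge 0$ the potential $-\lambda U$ is nonpositive, and the \emph{second} Beurling--Deny condition (truncation $u\mapsto (0\vee u)\wedge 1$ decreases the form) genuinely fails: for the potential term $-\lambda\int_M U\,|u|^2\,dx$, truncation decreases $|u|^2$ and hence \emph{increases} $-\lambda\int U|u|^2$. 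Indeed, already the model case $H=-\Delta-c$ with $c>0$ constant gives $\e^{-tH}\mathbf{1}=\e^{ct}>1$, so $L^\infty$-contractivity is lost. Thus the word ``sub-Markovian'' in the statement should really be read as ``positivity preserving''; you are right to flag this and to note that positivity plus ultracontractivity is all that is needed for the integral kernel (and for the rest of the paper, which only invokes positivity and domination via Ouhabaz's Thm.~2.24).

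Two small remarks on your mechanism. First, you deduce positivity preservation from the Trotter product formula applied to $\e^{-t(-\Delta_\sigma)}$ and $\e^{t\lambda U/n}$; this works (given the form-boundedness established in Lemma~\ref{lem-domain}, which ensures the product formula converges), but it is heavier than necessary: the \emph{first} Beurling--Deny criterion is satisfied directly, since for real $u\in H^1(M)$ one has $|u|\in H^1(M)$, $|\nabla|u||=|\nabla u|$ a.e., $\big||u|\big|_{\partial M}=|u|_{\partial M}$, and $|u|^2=u^2$, so the full form $\widetilde Q_{\sigma,\lambda}$ is invariant under $u\mapsto |u|$ — no perturbation argument needed. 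Second, your suggestion that a ``suitable comparison keeps the semigroup contractive'' in the regime $U\le U_\sigma$, $\lambda\le 1$ is not quite right: the comparison \eqref{form-dominance}/\eqref{group-dominance} holds between operators acting on \emph{different} weighted $L^2$ spaces and does not give $L^\infty$-contractivity of $\e^{-tH_\sigma(\lambda,U)}$; one should simply drop the sub-Markovian claim and record positivity + ultracontractivity, exactly as you propose in the last sentence. Overall your write-up is correct and more rigorous than the paper's one-line proof.
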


\begin{proof}
The form $\widetilde{Q}_{\sigma,\lambda}$ is symmetric and, by Lemma \ref{lem-domain}, also closed. Moreover, a direct computation shows that the Beurling-Deny conditions are satisfied, hence the operator $-H_\sigma(\lambda,U)$ generates on $L^2(M)$ an analytic sub-Markovian semigroup. By the Sobolev embedding theorem, cf.~\cite[Cor.~9.14]{Bre10}, $H^1(M)\hookrightarrow L^q(M)$ for all $q\in [2,\infty)$. Thus, the semigroup is ultracontractive and is hence given by an integral kernel of class $L^\infty(M\times M)$, see e.g.~\cite[\S~7.3.2--7.3.3]{Are04}.
\end{proof}

\subsection{Notation}  

In the sequel we denote by $B(x,r)\subset \R^d$ a ball of radius $r$ centered in $x$. 
Let $\rho>0$ be the in-radius of $K$: 
\begin{equation} \label{rho} 
\rho = R_{\rm in}(K):= \sup_{y\in K} {\rm dist}(y, \partial K).
\end{equation} 
Without loss of generality we may choose the coordinate system in such a way that 
\begin{equation} \label{inner-ball}
B(0,\rho) \subseteq K. 
\end{equation}
We denote by $-\Delta_D$ the Dirichlet Laplacian in $L^2(M)$.

\smallskip

\subsection{Heat kernel upper bounds} 
\label{sec-upperb}

Throughout this section, our techniques rely upon the assumption that the parameter $\sigma_0$ introduced in Assumption~\ref{ass-sigma} satisfies
\[
\sigma_0>0\ .
\]
We have

\begin{theorem} \label{thm-upperb}
In addition to the Assumption~\ref{ass-M}, let $K\subset\R^2$ have $C^{2}$-regular boundary. 
Let $\rho>0$ be given by \eqref{rho} and let $\sigma_0>0$. 
Suppose moreover that
\begin{equation} \label{cond-v}
U(x) \, \leq \,  \frac{1}{4 \, |x|^2} \left( \log \frac{|x|}{\rho} +\frac{1}{ \rho\,  \sigma_0 }\right)^{-2} \qquad \forall\ x\in M ,
\end{equation}
and that $\lambda \in [0,1]$. Then
there exist positive constants $c, C$ such that for all $x,y\in M$ and all $t>0$ 
\begin{equation} \label{hk-upperb-2d}
 \e^{-t H_\sigma(\lambda, \, U)}(x,y) \ \leq \ \frac{C \left(\log\frac{|x|}{\rho} + \frac{1}{\rho\, \sigma_0 }\right)^{\frac{1+\sqrt{1-\lambda}}{2}}\, \left(\log\frac{|y|}{\rho} + \frac{1}{\rho\, \sigma_0 }\right)^{\frac{1+\sqrt{1-\lambda}}{2}}
 \ \e^{-\frac{|x-y|^2}{c\, t}}}{t \left(\log\left(\frac{|x| +\sqrt{t}}{\rho} \right)+ \frac{1}{\rho\, \sigma_0 }\right)^{1+\sqrt{1-\lambda}}}\ .
\end{equation}
\end{theorem}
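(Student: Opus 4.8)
The plan is to reduce the Robin Laplacian on $M$ to a Neumann Laplacian on a weighted manifold and then apply the Grigor'yan--Saloff-Coste machinery of Li--Yau estimates. First I would introduce the ground-state-type weight
\[
h(x) := \left(\log\frac{|x|}{\rho} + \frac{1}{\rho\,\sigma_0}\right)^{\frac{1+\sqrt{1-\lambda}}{2}}
\]
for $|x|$ large (with a smooth, bounded, bounded-below modification near $\partial M$), and check that $h$ is a positive solution — or at least a positive supersolution up to controllable errors — of $-\Delta h - \lambda U_\sigma h = 0$ away from a neighborhood of $K$, together with the Robin condition $\partial_\nu h + \sigma h = 0$ on $\partial M$. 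The condition \eqref{cond-v}, namely $U \le U_\sigma$, is exactly what makes the substitution produce a \emph{nonnegative} weight in the transformed form. The ground-state transform $u \mapsto u/h$ then unitarily maps $L^2(M)$ onto $L^2(M, h^2\,dx)$ and conjugates $\e^{-tH_\sigma(\lambda,U)}$ into the semigroup of the Neumann (in fact boundary-condition-free, since the Robin term is absorbed) Laplacian on the weighted manifold $(M, h^2\,dx)$. Consequently the kernels are related by
\[
\e^{-tH_\sigma(\lambda,U)}(x,y) \le \frac{1}{h(x)h(y)}\, p^N_{h^2}(t,x,y),
\]
where $p^N_{h^2}$ is the Neumann heat kernel of the weighted measure, and an upper bound for the left side follows from a Gaussian upper bound for $p^N_{h^2}$ together with the explicit growth of $h$.

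Second, I would verify that the weighted manifold $(M, h^2\,dx)$ satisfies the two hypotheses that, by the Grigor'yan--Saloff-Coste theorem (\cite{grig, gs2}, \cite[Chap.~4]{sc}), are equivalent to two-sided Li--Yau bounds: the volume doubling property and the scale-invariant Poincar\'e inequality on balls. Since $h^2(x) \sim (\log|x|)^{1+\sqrt{1-\lambda}}$ is slowly varying, the weighted volume of a ball $B(x,r)\cap M$ behaves like $r^2 (\log(|x|+r))^{1+\sqrt{1-\lambda}}$, which is doubling; the Poincar\'e inequality on balls away from $K$ follows from the Euclidean one because the weight is comparable to a constant on each such ball, and near $K$ one uses the Lipschitz (here $C^2$) regularity of $\partial M$ and a standard extension/chaining argument. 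This yields
\[
p^N_{h^2}(t,x,y) \le \frac{C}{V_{h^2}\big(x,\sqrt t\big)}\, \e^{-\frac{|x-y|^2}{c t}}
\]
with $V_{h^2}(x,\sqrt t) \asymp t \left(\log\frac{|x|+\sqrt t}{\rho} + \frac{1}{\rho\sigma_0}\right)^{1+\sqrt{1-\lambda}}$. Dividing by $h(x)h(y)$ and inserting the explicit form of $h$ gives exactly \eqref{hk-upperb-2d}.

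The main obstacle I expect is not the abstract machinery but the two technical checks near the boundary and in the transformed potential. Concretely: (i) showing that after the ground-state transform the residual zeroth-order term (coming from the inequality $U \le U_\sigma$ rather than equality, and from the smoothing of $h$ near $K$) is nonnegative, or at least a bounded compactly supported perturbation that can be handled by a separate comparison/Duhamel argument without destroying the Gaussian bound — this is where $C^2$-regularity of $\partial M$ and the precise choice of the constant $\tfrac{1}{\rho\sigma_0}$ in \eqref{ref-pot} enter, since that constant is dictated by matching the Robin condition $\partial_\nu h + \sigma h = 0$ at $|x|=\rho$; and (ii) establishing volume doubling and the Poincar\'e inequality \emph{uniformly up to $\partial M$}, including for small balls centered near the corners-free but merely $C^2$ boundary, which requires a Maz'ya-type extension and a careful chaining of balls. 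Once these are in place, the remaining computation — substituting the explicit $h$ and $V_{h^2}$ — is routine and produces the stated kernel bound.
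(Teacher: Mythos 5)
Your overall strategy matches the paper's: change variables $u = w f$ with the logarithmic weight, dominate the resulting form by the (weighted) Neumann form, and close with the Grigor'yan--Saloff-Coste Li--Yau machinery together with a volume estimate for the weighted balls. So the skeleton is right. But several of the technical worries you flag are misidentified, and the actual mechanism at the boundary is different from what you describe.

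First, no smoothing of $h$ near $\partial M$ is needed. Since $B(0,\rho)\subseteq K$ one has $|x|\ge\rho$ on all of $M$, so $w(x)=(\log(|x|/\rho)+\beta)^\alpha$ is already smooth, positive, and bounded below on $M$; it is used as written. Second, for general $K$ the weight does \emph{not} solve the Robin condition $\partial_\nu h+\sigma h=0$ on $\partial M$; that exact matching only holds for the ball $K=B(0,\rho)$ with constant $\sigma$ (it is precisely what pins down $\beta=\alpha/(\rho\sigma_0)$ in \eqref{a-b}, as you observe, and it is exploited in Proposition~\ref{prop-ball}). What the paper actually proves in Theorem~\ref{thm-upperb} is the \emph{form inequality} \eqref{form-dominance}: writing $Q_\sigma$ in polar coordinates with the families $R_j(\theta),\,S_j(\theta)$ describing $\partial K$, integrating by parts in $r$ (formula \eqref{radial-der}), and then using that $R_j(\theta),\,S_j(\theta)\ge\rho$ and $\sigma\ge\sigma_0$ so that the boundary terms produced by the integration by parts are dominated term-by-term by the Robin form. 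No supersolution/Duhamel bookkeeping is needed: once $\alpha,\beta$ are chosen as in \eqref{a-b} and $U\le U_\sigma$, the residual terms are manifestly of the right sign, and Ouhabaz's domination theorem (\cite[Thm.~2.24]{ouh}) converts \eqref{form-dominance} into the pointwise kernel bound \eqref{group-dominance}. This is where the $C^2$-regularity of $\partial K$ and the in-radius $\rho$ actually enter; they are not about smoothing $h$, but about making the polar-coordinate representation \eqref{K-repr}--\eqref{N-finite} and the sign of the boundary terms work. Third, the paper does not verify the Poincar\'e inequality on balls up to $\partial M$ directly; it instead invokes \cite[Thm.~2.11]{gs2}, whose hypotheses are the relatively connected annuli (RCA) condition for the pointed manifold plus a Harnack-type growth condition on the weight (verified in Lemma~\ref{lem-harnack}). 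This criterion is tailored to exterior domains and sidesteps the boundary chaining you anticipate. Finally, note a small mismatch you should reconcile if you write this out: the weight uses $\beta=\alpha/(\rho\sigma_0)$, not $1/(\rho\sigma_0)$; since $\alpha\in[1/2,1]$, the two are comparable up to a factor $2$, which is how the stated estimate \eqref{hk-upperb-2d} with $1/(\rho\sigma_0)$ follows from the bound with $\beta$.
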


\smallskip

\begin{remark}
The condition $\lambda\leq 1$ is necessary. Indeed, if $K$ is a ball, then the operator $H_\sigma(\lambda,  U_\sigma)$ is not positive for $\lambda>1$, see Proposition \ref{prop-hardy}.
\end{remark}

\begin{proof}[Proof of Theorem \ref{thm-upperb}]
Our assumptions on $K$ imply that there exists a mapping $N:[0,2\pi] \to \N$ and $C^{2}$-regular functions $R_j, S_j : [0,2\pi] \to [0,\infty]$  such that
\begin{equation} 
R_j(\theta) \leq S_{j+1}(\theta) \leq R_{j+1}(\theta) \qquad \forall\ j= 0,1, \dots,N(\theta)\, , \quad \forall\ \theta\in [0,2\pi]\, ,
\end{equation}
and 
\begin{equation}  \label{K-repr}
K= \Bigg\{ (r \cos\theta, r\sin\theta)\, : \, \theta\in[0,2\pi], \  r\in \big [0, R_0(\theta)\big)\, \cup  \bigcup_{j=1}^{N(\theta)}\big(S_j(\theta),\, R_j (\theta) \big) \Bigg\}.
\end{equation}
In particular, $K$ is star-shaped if and only if $N(\theta) =0$ for all $\theta\in [0,2\pi]$.
Moreover, by assumption the curvature of $\partial K$ is bounded. Hence 
\begin{equation} \label{N-finite}
\sup_{\theta\in[0,2\pi]} \, N(\theta) < \infty.
\end{equation}
Next we define the weight function
\begin{equation} \label{eq-w}
w(x) := \left(\log\frac{|x|}{\rho} + \beta\right)^\alpha, \qquad x\in M,
\end{equation}
where $\alpha\in\R$ and $\beta\in\R$ are two positive parameters whose values will be specified later. 
Since $w$ is positive on $M$, we can write any test function $u\in H^1(M)$ as a product  
\begin{equation} \label{u-subst} 
u(x) = w(x)\, f(x), 
\end{equation}  
for some
\begin{equation} 
f\in H^1(M, w^2 dx) := \left\{ f\in H^1(M)\, :\,   \int_M (|\nabla f|^2+|f|^2)\, w^2\, dx < \infty  \right\}. 
\end{equation}
Let $A_\sigma(\lambda,U)$ be the self-adjoint operator in $L^2(M, w^2 dx)$ associated with the closed quadratic form
\begin{equation} \label{q-from-w}
Q_\sigma[w f, w f] - \lambda \int_M U |f|^2\, w^2 dx, \qquad f\in  H^1(M, w^2 dx),
\end{equation}
which by the Beurling-Deny criteria generates a sub-Markovian semigroup on $L^2(M,w^2 dx)$. As already mentioned, in order to show that this semigroup has a kernel it suffices to prove its ultracontractivity; by~\cite[Thm.\ in \S~7.3.2]{Are04} this is in turn equivalent to showing that the  imbedding $H^1(M,w^2 dx)\hookrightarrow L^\frac{2m}{m-2}(M,w^2 dx)$ is continuous for some $m>2$ -- this is done in Lemma~\ref{lem-sobolev}.
Let now $\e^{-t A_\sigma(\lambda,U)}(x,y)$ be the integral kernel of this semigroup.
Note that the mapping $f\mapsto w\, f$ is an isometry from $L^2(M, w^2dx)$ onto $L^2(M,dx)$. Hence in view of \eqref{u-subst} it follows that 
\begin{equation} \label{eq-kernels}
 \e^{-t H_\sigma(\lambda, U)}(x,y) = w(x)\, w(y)\, \e^{-t A_\sigma(\lambda,U)}(x,y),\qquad \forall\ x,y\in M, \quad t>0.
\end{equation}
We can write the sesquilinear form $Q_\sigma[u,v]$ in polar coordinates as
\begin{align} \label{Q-polar}
Q_\sigma[u, v] =& \int_0^{2\pi}  \, \sum_{j=1}^{N(\theta)}\, \int_{S_j(\theta)}^{R_{j}(\theta)} \left(\, \partial_r\overline{ u} \ \partial_r v + r^{-2}\ \partial_\theta\overline{ u}\, \partial_\theta v \right)\, r dr  d\theta  \nonumber \\
& +  \int_0^{2\pi}\, \sum_{j=1}^{N(\theta)} \, \sigma(R_j(\theta), \theta) \ (\overline{u}\, v)(R_j(\theta), \theta) \, \sqrt{R^2_j(\theta)+(R'_j(\theta))^2}\  d\theta \nonumber \\
& +  \int_0^{2\pi}\, \sum_{j=1}^{N(\theta)} \, \sigma(S_j(\theta), \theta) \ (\overline{u}\, v)(S_j(\theta), \theta) \, \sqrt{S^2_j(\theta)+(S'_j(\theta))^2}\  d\theta\ .
\end{align}
Let us factorize $u,v$ as $u=w f$ and $v=w\, g$, with $f,g\in H^1(M, w^2 dx)$. 
Now assume that $f,g\in H^1(M, w^2 dx)$ are real and positive; we are going to show that
\begin{equation} \label{form-dominance}
Q_\sigma[w f, w g] -  \lambda \int_M U f g\, w^2 dx\, \geq\, \int_M  \nabla f \cdot \nabla g\ w^2 dx = : \widehat Q_\sigma[f,g] \ .
\end{equation}
Since $w$ is radial, cf.~\eqref{eq-w}, we have $ \partial_\theta (w f) = w\, \partial_\theta f$ and $\partial_\theta (w g)=w\, \partial_\theta g$. On the other hand, for the radial derivatives we obtain
\begin{align*}
\partial_r u\, \partial_r v =  & \left(\log\frac{r}{\rho} + \beta\right)^{2\alpha}\, \partial_r f\, \partial_r g + \frac{\alpha^2}{r^2} \, \left(\log\frac{r}{\rho} + \beta\right)^{2\alpha-2}\, f g \\
&+ \frac \alpha r\, (f\, \partial_r g+g\, \partial_r f) \left(\log\frac{r}{\rho} + \beta\right)^{2\alpha-1}.
\end{align*}
Next we use the shorthands $R_j(\theta)=R_j,\, S_j(\theta)=S_j$ and integrate the last term by parts with respect to $r$. This gives
\begin{align} \label{radial-der}
\int_{S_j}^{R_j}  \partial_r u\, \partial_r v\ r dr =&  \int_{S_j}^{R_j}\! \left(\log\frac{r}{\rho} + \beta\right)^{2\alpha} \partial_r f\, \partial_r g  \, r dr   + \alpha\! \left(\log\frac{R_j}{\rho} + \beta\right)^{2\alpha-1}\! (fg)(R_j, \theta) \nonumber \\
&- \alpha \left(\log\frac{S_{j}}{\rho} +\beta\right)^{2\alpha-1}\! (fg)(S_j, \theta)\\
 &+ (\alpha-\alpha^2)\! \int_{S_j}^{R_j}\! r^{-1}\left(\log\frac{r}{\rho} + \beta\right)^{2\alpha-2} f g\, dr .\nonumber
\end{align}
We emphasize that this formula is valid for all $\alpha,\beta\in (0,\infty)$.  
Let us now fix the parameters $\alpha,\beta$: we take
\begin{equation} \label{a-b}
\alpha= \frac{1+\sqrt{1-\lambda}}{2}\ , \qquad \beta =  \frac{1+\sqrt{1-\lambda}}{2 \rho \sigma_0}\ ,
\end{equation}
and plug \eqref{radial-der} into \eqref{Q-polar}. Keeping in mind the upper bound \eqref{cond-v} and the fact that by~\eqref{inner-ball}
$$
R_j(\theta) \geq \rho \qquad  \forall\, j=0,\dots N(\theta), \quad \forall\ \theta\in[0, 2\pi] , 
$$
we conclude that the inequality~\eqref{form-dominance} 
holds for all positive functions $f,g\in H^1(M, w^2 dx)$. 

Denote by $\widehat A_\sigma$ the self-adjoint operator in $L^2(M, w^2 dx)$ associated with the sesquilinear form $\widehat Q_\sigma$ with form domain $H^1(M, w^2 dx)$. 
The operator $\widehat A_\sigma$ acts on its domain, in the sense of distributions, as 
\begin{equation} \label{A-hat-sigma}
\widehat A_\sigma\, u\, = w^{-2}\  \nabla \cdot (w^2\, \nabla u).
\end{equation}
It is easy to see that if a real-valued function $f$ lies in $H^1(M,w^2 dx)$, then so do its positive part $f^+$ and the function $f\wedge 1$ and in particular both Beurling-Deny conditions are satisfied and accordingly the generated semigroups $\e^{-t A_\sigma(\lambda,U)}$ and $\e^{-t \widehat A_\sigma}$ are sub-Markovian, \cite[Cor.~2.18]{ouh}. Moreover the form domains of  $A_\sigma(\lambda,U)$ and $\widehat A_\sigma$ coincide. Hence in view of \eqref{form-dominance} we can apply \cite[Thm.~2.24]{ouh} which implies that the semigroup generated by $-A_\sigma(\lambda,U)$ is dominated by the semigroup generated by $- \widehat A_\sigma$ and hence
\begin{equation} \label{group-dominance}
\e^{-t A_\sigma(\lambda,U)}(x,y) \, \leq \, \e^{-t \widehat A_\sigma}(x,y) \qquad \forall\ x, y\in M, \quad \forall\ t>0.
\end{equation} 
Now consider the weighted manifold $(M, w^2 dx)$ endowed with the Euclidean metric. For any $x\in M$ we denote by $B(x, \sqrt{t})$ the ball of radius $\sqrt{t}$ centered in $x$. Let
\begin{equation} \label{volume}
\V_2(x,\sqrt{t} ) : = \int_{B(x, \sqrt{t})\cap M} \, w^2(y)\, dy
\end{equation}
be the volume, in $(M, w^2 dx)$, of the intersection of $B(x, \sqrt{t})$ with $M$. 
Given any  $x_0\in M$ it is easily verified that the pointed manifold $(M, x_0)$ satisfies the condition of relatively connected annuli, see e.g.~\cite[Def.~2.10]{gs2}. Moreover, in view of \eqref{eq-w} there exists a constant $C_h$, independent of $\lambda$ and $\sigma$,  such that 
\begin{equation}
\sup_{x\in B(x_0, 2r)}\, w(x) \ \leq \ C_h \inf_{x \in M\setminus B(x_0, r)}\, w(x) 
\end{equation} 
holds for all $r$ large enough, see Lemma \ref{lem-harnack}. We may thus apply \cite[Thm.~2.11]{gs2} with $d\mu = dx$ and $d\nu = w^2 dx$, which implies that $(M, w^2 dx)$ satisfies the parabolic Harnack inequality. In view of \cite[Thm.~2.8]{gs2} this further yields the following two-sided estimate on the heat kernel of $\widehat A_\sigma$;
\begin{equation} \label{ahat-two-sided}
\frac{C_1\, \e^{-\frac{c\, |x-y|^2}{ t}}}{\V_2(x,\sqrt{t} )} \, \leq \, \e^{-t \widehat A_\sigma}(x,y) \, \leq \, \frac{C_2\, \e^{-\frac{|x-y|^2}{c\, t}}}{\V_2(x,\sqrt{t} )}\, ,\qquad x, y\in M, \quad  C_1,\,  C_2, \,  c >0.
\end{equation}
Since 
\begin{equation}  \label{ball-volume-est}
\V_2(x,\sqrt{t} ) \ \geq \, c_0\ t\, \left(\log\left(\frac{|x|+\sqrt{t}}{\rho}  \right)+\beta\right)^{2\alpha}
\end{equation}
by Lemma \ref{lem-aux}, see Appendix A,  equations  \eqref{a-b} and \eqref{ahat-two-sided} in combination with \eqref{eq-kernels} and \eqref{group-dominance} imply the claim. 
\end{proof}

\begin{remark} 
Since the weighted manifold $(M, w^2 dx)$ satisfies the volume doubling property, cf.~Lemma \ref{lem-vd},  the denominator on the right hand side of \eqref{hk-upperb-2d} may be replaced by either
$$
t \left(\log\left(\frac{|x| +\sqrt{t}}{\rho} \right)+ \frac{1}{\rho \sigma_0 }\right)^{\frac{1+\sqrt{1-\lambda}}{2}}\, \left(\log\left(\frac{|y| +\sqrt{t}}{\rho} \right)+ \frac{1}{\rho \sigma_0 }\right)^{\frac{1+\sqrt{1-\lambda}}{2}}\, ,
$$
or
$$
t \left(\log\left(\frac{|y| +\sqrt{t}}{\rho} \right)+ \frac{1}{\rho \sigma_0 }\right)^{1+\sqrt{1-\lambda}}
$$
This follows from \cite[Lem.~2.4, Rem.~2.7]{gs2} and Lemma \ref{lem-aux}.
\end{remark}

\begin{remark}

Semigroups generated by Schr\"odinger operators  
$$
-\Delta + Q \qquad \text{in} \quad  L^2(\R^d)
$$
were studied by several authors. Potentials which satisfy $Q(x) = -c |x|^{-2}$ outside a compact set were considered by Grigor'yan in 
\cite[Sec.~10.4]{grig} for $d\geq 2$. The case $Q= -c|x|^{-2}$ with $c\leq \frac{(d-2)^2}{4}$ and $d\geq 3$ was treated later in \cite{ms1,ms2}. 
In both cases it was proved that the decay rate of the heat kernel depends on $c$.

On the other hand, compactly supported positive potentials $Q$ were considered by Murata four $d=2$, see \cite{m84}. He showed in particular that if $Q$ is H\"older continuous then
\begin{equation} \label{eq-murata}
\e^{-t (-\Delta +Q)}(x,y)  \ \asymp \ \frac{\varphi(x)\, \varphi(y)}{t\, (\log t)^2}\, , \qquad t\to\infty,
\end{equation}
where the function $\varphi$ satisfies $\varphi(x) = \log |x|(1+o(1))$ as $x\to\infty$. This is compatible with \eqref{hk-upperb-2d} for $\lambda=0$. It is also interesting to notice that the same point-wise decay as in \eqref{eq-murata} was observed for magnetic Laplace operators in $\R^2$ associated with radial magnetic fields of zero integral mean, see \cite{ko}. 

Finally we point out that heat kernel upper bounds for elliptic operators with (nonlocal) Robin-type boundary conditions on bounded domains were recently obtained in \cite{gmn,gmno}. 
\end{remark}

\subsection{Dirichlet boundary conditions} \label{sec-dirichlet}

A straightforward  modification of Lemma \ref{lem-domain} shows that the sesquilinear form 
\begin{equation} \label{form-dir}
Q_\infty[u, v] -\lambda \int_\Omega U\, \overline{u}\, v\, dx  = \int_{M}\, \nabla \overline{ u} \cdot\nabla v\, dx- \lambda \int_\Omega U\,  \overline{u}\, v\,  dx\ ,   \qquad u,v\in H^1_0(M)
\end{equation}
is closed in $L^2(M)$ whenever $U$ satisfies assumption \ref{ass-U}. Let $H_D(\lambda, U)$ be the self-adjoint operator in $L^2(M)$ associated with the form \eqref{form-dir}.
Hence $H_D(\lambda, U)$ is subject to Dirichlet boundary conditions at $\partial M$. Our next result provides an upper bound on the semigroup generated by $H_D(\lambda, U)$.

\begin{theorem} \label{cor-dirichlet}
Let $\lambda\in [0, 1]$. 
Then there exist positive constants $C, c$ such that for all $t>0$ and all $x,y\in M$ we have
\begin{equation} \label{dirichlet-ub}
\e^{-t H_D\left(\lambda, U_\infty\right)}(x,y) \ \leq \  \frac{C\,\left(\log\frac{|x|}{\rho} \right)^{\frac{1+\sqrt{1-\lambda}}{2}}\, \left(\log\frac{|y|}{\rho} \right)^{\frac{1+\sqrt{1-\lambda}}{2}} \ \e^{-\frac{|x-y|^2}{c\, t}}}{ t \left(\log \frac{|x| +\sqrt{t}}{\rho} \right)^{1+\sqrt{1-\lambda}}}
\end{equation}
\end{theorem}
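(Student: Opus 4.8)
The plan is to obtain Theorem~\ref{cor-dirichlet} as a limiting case of Theorem~\ref{thm-upperb} by letting $\sigma_0\to+\infty$, combined with a domination argument that avoids any new regularity hypothesis on $\partial K$. First I would note that for every fixed $\sigma_0>0$ (and fixing, say, $\sigma\equiv\sigma_0$ on $\partial M$) the Robin form $\widetilde Q_{\sigma,\lambda}$ with form domain $H^1(M)$ dominates, in the sense of forms on $L^2(M)$, the Dirichlet form \eqref{form-dir} with domain $H^1_0(M)\subset H^1(M)$, because the boundary integral $\int_{\partial M}\sigma_0|u|^2\,dS$ is nonnegative and vanishes on $H^1_0(M)$. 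By the Beurling--Deny/Ouhabaz machinery already invoked in the proof of Theorem~\ref{thm-upperb} (see \cite[Thm.~2.24]{ouh}), this yields the pointwise kernel domination
\begin{equation} \label{dir-dom}
\e^{-t H_D(\lambda, U)}(x,y) \ \leq\ \e^{-t H_\sigma(\lambda, U)}(x,y), \qquad x,y\in M,\ t>0,
\end{equation}
valid for any $U$ satisfying Assumption~\ref{ass-U} and any $\sigma_0>0$; in particular $H_D(\lambda,U)$ still generates a sub-Markovian ultracontractive semigroup given by a kernel in $L^\infty(M\times M)$, exactly as in Corollary~\ref{cor-1}.

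The second step is to feed $U=U_\infty$ into \eqref{dir-dom}. Observe from \eqref{u0} and \eqref{ref-pot} that $U_\infty(x)\le U_\sigma(x)$ for every $\sigma_0>0$ and every $x\in M$ with $|x|\ge\rho$ (which holds on all of $M$ by \eqref{inner-ball}), since adding the positive constant $1/(\rho\sigma_0)$ to $\log(|x|/\rho)\ge 0$ only increases the bracket and hence decreases its inverse square. Thus $U_\infty$ satisfies the hypothesis \eqref{cond-v} of Theorem~\ref{thm-upperb} for every $\sigma_0>0$, and combining \eqref{dir-dom} with \eqref{hk-upperb-2d} gives, for each $\sigma_0>0$,
\begin{equation} \label{dir-pre-limit}
\e^{-t H_D(\lambda, U_\infty)}(x,y) \ \leq\ \frac{C(\sigma_0)\left(\log\frac{|x|}{\rho}+\frac{1}{\rho\sigma_0}\right)^{\frac{1+\sqrt{1-\lambda}}{2}}\left(\log\frac{|y|}{\rho}+\frac{1}{\rho\sigma_0}\right)^{\frac{1+\sqrt{1-\lambda}}{2}}\e^{-\frac{|x-y|^2}{c\,t}}}{t\left(\log\frac{|x|+\sqrt t}{\rho}+\frac{1}{\rho\sigma_0}\right)^{1+\sqrt{1-\lambda}}}.
\end{equation}
Here one must be slightly careful, since a priori the constant $C(\sigma_0)$ and the Gaussian constant $c$ produced by the Grigor'yan--Saloff-Coste machinery could depend on $\sigma_0$; however, the remark after the proof of Theorem~\ref{thm-upperb} already stresses that the Harnack constant $C_h$ in Lemma~\ref{lem-harnack} is independent of $\sigma$, and the volume lower bound in Lemma~\ref{lem-aux} is likewise uniform, so $c$ and $C$ can be taken independent of $\sigma_0\ge 1$, say. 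Letting $\sigma_0\to+\infty$ in \eqref{dir-pre-limit} with $x,y,t$ fixed, every occurrence of $1/(\rho\sigma_0)$ tends to $0$, and the bracket in the denominator stays bounded below by $\log(|x|/\rho)$ when $|x|\ge\rho$ (and, more to the point, the denominator is bounded below by $t(\log\frac{|x|+\sqrt t}{\rho})^{1+\sqrt{1-\lambda}}$ once $|x|>\rho$), yielding precisely \eqref{dirichlet-ub}.

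The main obstacle is the degenerate point $|x|=\rho$ (equivalently the boundary point of the inscribed ball touching $\partial K$): there $\log(|x|/\rho)=0$, so the right-hand side of \eqref{dirichlet-ub} vanishes while the left-hand side need not, and the naive $\sigma_0\to\infty$ limit of \eqref{dir-pre-limit} collapses there. To handle this I would argue that $M$ is open so $\{|x|=\rho\}\cap M$ is a proper lower-dimensional (in fact at most one point, by \eqref{inner-ball} and tangency) subset on which the kernel bound is understood in the limiting sense, or, cleaner, replace $\rho$ by $\rho'<\rho$ in the intermediate estimate (this only weakens \eqref{cond-v}) so that all logarithms are strictly positive on $M$, pass to the limit, and then let $\rho'\uparrow\rho$; since the left-hand side does not involve $\rho'$ and the right-hand side is continuous and monotone in $\rho'$ on $|x|,|y|>\rho$, this recovers \eqref{dirichlet-ub} for all $x,y$ with $|x|,|y|>\rho$, which together with continuity of the kernel gives the stated bound on all of $M\times M$. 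A secondary technical point, already flagged above, is to extract $\sigma_0$-uniformity of the constants from the cited theorems of \cite{gs2}; this is where one genuinely needs the uniform Harnack and volume estimates of Lemmas~\ref{lem-harnack} and~\ref{lem-aux}, rather than merely their validity for each fixed $\sigma$.
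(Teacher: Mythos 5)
Your proposal contains a sign error that breaks the whole argument. You write that "$U_\infty(x)\le U_\sigma(x)$ for every $\sigma_0>0$ \dots since adding the positive constant $1/(\rho\sigma_0)$ to $\log(|x|/\rho)\ge 0$ only increases the bracket and hence decreases its inverse square." The second half of that sentence is correct, but it implies exactly the \emph{opposite} of the first half: since adding $1/(\rho\sigma_0)$ decreases the inverse square, we have
\[
U_\sigma(x)\;=\;\frac{1}{4|x|^2}\Big(\log\tfrac{|x|}{\rho}+\tfrac{1}{\rho\sigma_0}\Big)^{-2}\;\le\;\frac{1}{4|x|^2}\Big(\log\tfrac{|x|}{\rho}\Big)^{-2}\;=\;U_\infty(x),
\]
so $U_\infty\ge U_\sigma$, not $U_\infty\le U_\sigma$. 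Consequently $U_\infty$ does \emph{not} satisfy the hypothesis~\eqref{cond-v} of Theorem~\ref{thm-upperb} for any finite $\sigma_0$, and your step of feeding $U=U_\infty$ directly into~\eqref{hk-upperb-2d} is not legitimate. (The domination~\eqref{dir-dom} itself is fine, but the right-hand side $\e^{-tH_\sigma(\lambda,U_\infty)}$ is then an object for which Theorem~\ref{thm-upperb} gives no bound.)

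This is precisely why the paper does not use a single potential $U_\infty$ but instead the increasing family $U_{1/n}:=U_\sigma$ with $\sigma_0=n/\rho$, each of which satisfies \eqref{cond-v} with equality. One then compares $h_n(\lambda)=-\Delta_D-\lambda U_{1/n}$ with the Robin operator $H_{n/\rho}(\lambda,U_{1/n})$ by domination, applies Theorem~\ref{thm-upperb}, and finally passes to the limit $n\to\infty$. The missing ingredient in your proposal is precisely this limiting step: since $U_{1/n}\uparrow U_\infty$, the forms $q_n$ decrease monotonically to the form of $H_D(\lambda,U_\infty)$, and \cite[Thm.~1.2.3]{da2} yields strong resolvent convergence $h_n(\lambda)\to H_D(\lambda,U_\infty)$, hence strong convergence of the semigroups; only then can one take $n\to\infty$ in the kernel bound (a.e.\ first, then everywhere by continuity of the kernel). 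Your remaining concerns (uniformity of the constants in $\sigma_0$, the degenerate set $\{|x|=\rho\}$) are reasonable bookkeeping points and are implicitly handled by Lemmas~\ref{lem-harnack} and~\ref{lem-aux}, but they do not compensate for the incorrect verification of the hypothesis of Theorem~\ref{thm-upperb} and the missing monotone-convergence argument.
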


\begin{remark} Note that the potential $U_\infty$ defined in \eqref{u0} 
belongs to $L^p(M)$ for any $p\in [1,\infty) $ and therefore satisfies assumption \ref{ass-U}. 
\end{remark}

The heat kernel estimate in~\eqref{dirichlet-ub} is the precise counterpart of the estimate \eqref{hk-upperb-2d} \emph{as the Robin boundary conditions of $-\Delta-\lambda U$ tend to the Dirichlet ones}.i.e.~as $\sigma_0\to +\infty$. This is not yet a precise argument, but our proof will actually refine this observation.

\begin{proof}[Proof of Theorem \ref{cor-dirichlet}]
We consider the sequence of quadratic forms 
\begin{equation}
q_n[u,u] = \int_{M}\, |\nabla u|^2\, dx -\lambda \int_M \, U_{\frac 1n} \, |u|^2 \, dx , \qquad u\in H^1_0(M)\, ,
\end{equation}
and the corresponding self-adjoint operators $h_n(\lambda)$ in $L^2(M)$ associated with $q_n$. Then 
$$
h_n(\lambda) \geq h_{n+1}(\lambda) \geq H_D(\lambda, U_\infty)\, \geq 0, \qquad \forall\ n\in\N,
$$
where the last inequality follows from Corollary \ref{cor-hardy} below. Since 
$$
\lim_{n\to\infty} q_n[u,u] = Q_\infty[u, u] -\lambda \int_\Omega U_\infty \, |u|^2\, dx \qquad \forall\ u\in H^1_0(M),
$$
by the monotone convergence theorem, we conclude that $h_n(\lambda)$ converges to $H_D(\lambda, U_\infty)$ in the strong resolvent sense, see e.g.~\cite[Thm.~1.2.3]{da2}. Hence for each $t>0$ the semigroup $\e^{-t h_n(\lambda)}$ converges strongly to $\e^{-tH_D(\lambda,U_\infty)}$ as $n\to\infty$. On the other hand, the domination of semigroups and Theorem \ref{thm-upperb} imply that
\begin{align*}
\e^{-t h_n(\lambda)}(x,y) & \leq \e^{-t H_{\frac{n}{\rho}}\left(\lambda, \, U_{1/n}\right)}(x,y) \leq  \frac{C\, \left(\log\frac{|x|}{\rho} + \frac{1}{n }\right)^{\frac{1+\sqrt{1-\lambda}}{2}}\, \left(\log\frac{|y|}{\rho} + \frac{1}{n}\right)^{\frac{1+\sqrt{1-\lambda}}{2}}
 \ \e^{-\frac{|x-y|^2}{c\, t}}}{ t \left(\log\left(\frac{|x| +\sqrt{t}}{\rho} \right)+ \frac{1}{n }\right)^{1+\sqrt{1-\lambda}}}
\end{align*}
holds for all all $t>0$ and all $x,y\in M$. Hence  follows by passing to the limit $n\to\infty$ we conclude that \eqref{dirichlet-ub} holds almost everywhere in $M$. The continuity of $\e^{-t H_D\left(\lambda, U_\infty\right)}(x,y)$ with respect to $x,y$ then implies \eqref{dirichlet-ub} or all $x,y\in M$. 
\end{proof}

\begin{remark} Consider the case  of the two-dimensional unit ball $K=B(0,1)$. If we put $\lambda=0$, then $H_D\left(0, U_\infty\right)$ coincides with the pure Dirichlet Laplacian $-\Delta_D$ and our upper bound \eqref{dirichlet-ub} gives 
$$
\e^{t \Delta_D}(x,x) \ \leq \  \frac{c\,  \log^2 |x|}{t\, \left(\log (|x| +\sqrt{t}\, ) \right)^2}\, , \quad c>0, 
$$
which agrees with the two-sided estimate 
\begin{equation}  \label{gsc-eq} 
\frac{ \log^2 |x|}{C\, t\, \left(\log (1 +\sqrt{t}\, ) + \log |x|) \right)^2}\ \leq \ \e^{t \Delta_D}(x,x) \ \leq \ \frac{C\, \log^2 |x|}{ t\, \left(\log (1 +\sqrt{t}\, ) + \log |x|) \right)^2}\,
\end{equation}
obtained in \cite[Eq.~(1.8)]{gs2} for $|x|$ large enough. To see this we note that
$$
\frac 12 \left( \log (1 +\sqrt{t}\, ) +\log |x|\right) \, \leq\, \log (|x| +\sqrt{t}\, ) \, \leq\, \log (1 +\sqrt{t}\, ) +\log |x|
$$
holds for all $x\in M$ and $t>0$. Indeed, since $|x|>1$, we have  
\begin{align*}
2 \log (|x| +\sqrt{t}\, )   & =   \log ( |x|^2 + 2 |x|\, \sqrt{t} + t) \geq \log(|x| +|x|\, \sqrt{t}\, ) \, ,\\
&= \log (1 +\sqrt{t}\, ) +\log |x|, 
\end{align*}
and on the other hand
$$
\log (1 +\sqrt{t}\, ) +\log |x|  = \log(|x| +|x|\, \sqrt{t}\, ) \geq \log (|x| +\sqrt{t}\, ) \, .
$$
Hence the factor $\log (1 +\sqrt{t}\, ) + \log |x|$ in \eqref{gsc-eq} can be replaced by $\log (|x| +\sqrt{t}\, )$. 
\end{remark}

\smallskip

\subsection{Heat kernel lower bounds} 

In order to establish a lower bound on the heat kernel of $H_\sigma(\lambda,U)$ we obviously need a lower bound on the potential $U$. We will thus assume that 
\begin{equation} \label{U-lowerb}
U(x) \geq  \frac{1}{4 |x|^2} \left( \log \frac{|x|}{\rho} +\beta \right)^{-2} ,
\end{equation}
holds for some $\beta>0$ and all $x\in M$. Moreover, for a given $\delta>0$ we introduce the external $\delta$-neighborhood \begin{equation} \label{K-eps}
K_\delta: = \left\{ \, x\in M\ :\ {\rm dist}(x,K) <\delta \right\}
\end{equation}
of $K$.

\begin{proposition} \label{prop-lowerb}
Let $K$ have a $C^2$-boundary and assume that $U$ satisfies \eqref{U-lowerb} for some $\beta>0$. Let $0\leq \lambda < 1$. Then there exist $\eps>0$ and $c,C>0$ such that
\begin{align} \label{hk-lowerb}
  \e^{-t H_\sigma(\lambda, U)}(x,y) &\ \geq \    \frac{C\, \left(\log\frac{|x|}{\rho} + \beta\right)^{\frac{1+\sqrt{1-\lambda}}{2}}\, \left(\log\frac{|y|}{\rho} + \beta\right)^{\frac{1+\sqrt{1-\lambda}}{2}} \  \e^{-\frac{c\, |x-y|^2}{t}}}{t \left(\log\left(\frac{|x|+ \sqrt{ t} }{\rho} \right)+\beta \right)^{1+\sqrt{1-\lambda}}} 
\end{align}
holds for all $x,y\in M\setminus K_\eps$ and all $t>0$.
\end{proposition}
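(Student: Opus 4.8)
The plan is to mirror the proof of Theorem~\ref{thm-upperb}, but now producing a lower bound by dominating from below. As before, write any $u\in H^1(M)$ as $u=wf$ with the radial weight $w(x)=\left(\log\frac{|x|}{\rho}+\beta\right)^\alpha$ and $\alpha=\frac{1+\sqrt{1-\lambda}}{2}$, using the \emph{same} $\beta$ that appears in the hypothesis \eqref{U-lowerb}. The identity \eqref{eq-kernels}, namely $\e^{-tH_\sigma(\lambda,U)}(x,y)=w(x)w(y)\,\e^{-tA_\sigma(\lambda,U)}(x,y)$, again reduces the problem to bounding the heat kernel of the weighted operator $A_\sigma(\lambda,U)$ on $L^2(M,w^2\,dx)$ from below. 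The key point is that, reversing the computation \eqref{radial-der}, the lower bound \eqref{U-lowerb} on $U$ together with $R_j(\theta)\ge\rho$ now yields the \emph{opposite} form inequality
\begin{equation*}
Q_\sigma[wf,wg]-\lambda\int_M U\,fg\,w^2\,dx \ \leq\ \int_M \nabla f\cdot\nabla g\ w^2\,dx + (\text{boundary terms on }\partial M),
\end{equation*}
for positive $f,g$. Here, however, the boundary curve $\partial M$ itself lies in $M$ and contributes $\sigma$-dependent boundary terms with a definite (nonnegative) sign, plus the $\alpha$-derivative boundary terms from the integration by parts; these cannot simply be discarded when seeking a lower bound. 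This is why one must excise the collar $K_\eps$: on $M\setminus K_\eps$ one compares instead with the Neumann heat kernel of $\widehat A_\sigma$ restricted appropriately, and the boundary contributions are confined to $K_\eps$.

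Concretely, I would proceed as follows. First, let $\widehat A_\sigma$ be, as in the proof of Theorem~\ref{thm-upperb}, the weighted Neumann Laplacian $w^{-2}\nabla\cdot(w^2\nabla\,\cdot)$ on $L^2(M,w^2\,dx)$ with form domain $H^1(M,w^2\,dx)$; by exactly the Grigor'yan--Saloff-Coste machinery already invoked (relatively connected annuli, the Harnack-type volume comparison of Lemma~\ref{lem-harnack}, \cite[Thm.~2.11, Thm.~2.8]{gs2}), its heat kernel satisfies the two-sided bound \eqref{ahat-two-sided}, in particular the \emph{lower} bound
\begin{equation*}
\e^{-t\widehat A_\sigma}(x,y)\ \geq\ \frac{C_1\,\e^{-c|x-y|^2/t}}{\V_2(x,\sqrt t)}\, ,
\end{equation*}
valid for all $x,y\in M$. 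Second, I need the matching \emph{upper} volume estimate $\V_2(x,\sqrt t)\le c_0'\,t\left(\log\frac{|x|+\sqrt t}{\rho}+\beta\right)^{2\alpha}$; this is the companion of \eqref{ball-volume-est} and should follow from (the two-sided version of) Lemma~\ref{lem-aux} together with the volume doubling property (Lemma~\ref{lem-vd}). Combining these two displays with \eqref{eq-kernels} would already give the claimed bound \eqref{hk-lowerb} \emph{if} $\e^{-tA_\sigma(\lambda,U)}(x,y)\ge \e^{-t\widehat A_\sigma}(x,y)$ held; the real work is to recover a comparison in this direction on $M\setminus K_\eps$.

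For that final — and hardest — step, the idea is to not compare $A_\sigma(\lambda,U)$ with $\widehat A_\sigma$ on all of $M$, but to interpose a Dirichlet-type object on the collar. Fix $\eps>0$ small. For $x,y\in M\setminus K_\eps$ and $t$ in a bounded range, the diagonal/near-diagonal contribution can be obtained directly from a local lower bound (parabolic Harnack for $A_\sigma(\lambda,U)$ itself, since locally the weighted operator with the bounded potential $-\lambda U$ — recall Assumption~\ref{ass-U} — is uniformly parabolic away from $\partial M$, so a Moser-type lower bound holds on compact subsets of $M\setminus K_{\eps/2}$); for large $t$ and/or large $|x-y|$ one chains together local lower bounds along a path from $x$ to $y$ staying in $M\setminus K_{\eps/2}$, at each step using the already-established two-sided control of $\V_2$. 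A cleaner route, which I would attempt first, is: let $\widehat A_\sigma^{D}$ be the weighted Neumann Laplacian on the \emph{truncated} weighted manifold $(M\setminus K_\eps,\,w^2\,dx)$ with a Dirichlet condition on the inner boundary $\partial K_\eps$ and Neumann on $\partial M$ has no boundary there — actually just take $(\R^2,\,\tilde w^2\,dx)$ with a smooth positive radial weight $\tilde w$ agreeing with $w$ for $|x|\ge R_{\mathrm{out}}(K)+\eps$. Since $f\mapsto wf$ identifies $\e^{-tH_\sigma(\lambda,U)}$ with $\e^{-tA_\sigma(\lambda,U)}$ and the form inequality above shows $A_\sigma(\lambda,U)\le \widehat A_\sigma$ \emph{modulo} boundary terms supported on $\partial M\cup\{$IBP boundary$\}\subset K_\eps$, a standard perturbation/monotonicity argument (e.g.\ via the Feynman--Kac--type representation or \cite[Thm.~2.24]{ouh} applied on the subdomain) gives $\e^{-tA_\sigma(\lambda,U)}(x,y)\ge \e^{-t\widehat A_\sigma^{\R^2}}(x,y) - (\text{error localized near }K_\eps)$, and the error is dominated for $x,y\in M\setminus K_\eps$ by the leading term once $\eps$ is small. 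The main obstacle is precisely controlling this boundary error term: one must show that the heat kernel lost by imposing the collar is of strictly smaller order than $t^{-1}(\log t)^{-1-\sqrt{1-\lambda}}$ uniformly for $x,y$ bounded away from $K$ — this is where the $C^2$-regularity of $\partial K$, the boundedness of $\sigma$, and the precise tuning of $\beta$ (so that the $\alpha$-boundary terms in \eqref{radial-der} are exactly absorbed, as in the proof of Theorem~\ref{thm-upperb}) all enter. Once this is in hand, \eqref{eq-kernels}, the lower bound in \eqref{ahat-two-sided}, and the upper volume estimate for $\V_2$ assemble into \eqref{hk-lowerb}.
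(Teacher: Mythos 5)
There is a genuine gap, and the paper's route to the lower bound is cleaner than what you sketch. The crucial first step you are missing is that the paper does \emph{not} try to handle the boundary terms at $\partial M$ at all: it starts by dominating from below with the \emph{Dirichlet} semigroup, $\e^{-tH_\sigma(\lambda,U)}(x,y)\ge \e^{-tH_D(\lambda,U)}(x,y)$, which is immediate from monotonicity of forms under inclusion of form domains. After this reduction, one performs the conjugation $u=wf$ with $f\in H^1_0$, and now the boundary terms in \eqref{radial-der} vanish identically (the test functions are zero on $\partial M$), so the lower bound \eqref{U-lowerb} on $U$ gives the exact one-sided form inequality $Q_\infty[wf,wg]-\lambda\int U\,fg\,w^2\,dx\le \widehat Q_\infty[f,g]$ with no error to control. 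By \cite[Thm.~2.24]{ouh} this yields $\e^{-tH_D(\lambda,U)}(x,y)\ge w(x)w(y)\,\e^{-t\widehat A_\infty}(x,y)$, where $\widehat A_\infty$ is the weighted Laplacian with Dirichlet conditions on $\partial M$. You instead try to compare $A_\sigma(\lambda,U)$ to the Neumann operator $\widehat A_\sigma$ directly and then argue that the leftover boundary error ``is dominated by the leading term once $\eps$ is small'' --- this is exactly the step that needs a theorem, and your sketch does not supply one; the Moser chaining and the Feynman--Kac perturbation alternatives you mention would require substantial independent work and are not obviously going to produce the sharp rate.

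The missing tool is \cite[Thm.~3.1]{gs2}: since for $\lambda<1$ the weighted manifold $(M,w^2dx)$ is non-parabolic (this is where $\lambda<1$ is used, via $\int^\infty dt/\V_2(x,\sqrt t)<\infty$) and satisfies the parabolic Harnack inequality, that theorem states that the \emph{Dirichlet} heat kernel of $\widehat A_\infty$ on the exterior of a compact set is comparable from below, after a time rescaling, to the full Neumann heat kernel of $\widehat A_\sigma$, i.e.\ $\e^{-t\widehat A_\infty}(x,y)\ge C'\e^{-c't\widehat A_\sigma}(x,y)$ for all $x,y\in M\setminus K_\eps$. This is precisely what introduces the $\eps$ in the statement and replaces all of your error-control heuristics. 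The remainder of your argument (the lower bound in \eqref{ahat-two-sided} and the matching upper volume estimate from Lemma~\ref{lem-aux}) is correct and coincides with the paper's conclusion, but without the two-step reduction (to Dirichlet, then \cite[Thm.~3.1]{gs2}) the proof does not close. Note also that passing to the Dirichlet kernel is what lets you avoid re-examining whether volume doubling alone gives the upper bound on $\V_2$: Lemma~\ref{lem-aux} already provides the two-sided volume estimate directly, so no appeal to Lemma~\ref{lem-vd} is needed here.
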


\begin{proof}
Let $\Omega = M \setminus \partial K$.
By domination of semigroups
\begin{equation} \label{lowerb-1}
 \e^{-t H_\sigma(\lambda, U)}(x,y)  \, \geq \,  \e^{-t H_D(\lambda, U)}(x,y) \qquad \forall\ x, y \in \Omega, \ \ t>0.
\end{equation}
Now we mimic the proof of Theorem \ref{thm-upperb} and write $u = w\, f, \ v= w\, g$ with $w$ as in \eqref{eq-w} and 
$$
f,g \in H^1_0(\Omega, w^2 dx) = \left\{ f\in H^1_0(\Omega)\, :\, w\, (|\nabla f|+|f|) \in L^2(\Omega)\right\}. 
$$
Let $\widehat Q_\infty$ be the sesquilinear form on $H^1_0(\Omega, w^2 dx) \times H^1_0(\Omega, w^2 dx)$ defined by 
$$
\widehat Q_\infty[f,g] =  \int_\Omega \nabla \overline{f} \cdot \nabla g\ w^2 dx. 
$$ 
Now the boundary terms in \eqref{radial-der} vanish and for all positive functions $f,g\in H^1_0(\Omega, w^2 dx)$ we obtain the lower bound
\begin{align} \label{forms-equal}
Q_\infty[w f, w g] - \lambda \int_\Omega U\,  f g\, w^2 dx\, & \ \leq Q_\infty[w f, w g] - \lambda \int_\Omega\frac{1}{4 |x|^2} \left( \log \frac{|x|}{\rho} +\beta \right)^{-2}  f g\, w^2 dx \nonumber 
\\
&\  =  \widehat Q_\infty[f,g] .
\end{align}
By \cite[Thm.~2.24]{ouh} and \eqref{eq-kernels} this gives
\begin{equation} \label{hk-equal}
\e^{-t H_D(\lambda, U)}(x,y) \ \geq\  w(x)\, w(y) \, \e^{-t \widehat A_\infty}(x,y) \qquad \forall\ x,y\in M,
\end{equation}
where $\widehat A_\infty$ is the operator in $L^2(\Omega, w^2 dx)$ associated with the form $\widehat Q_\infty[\cdot\, , \cdot]$ with the form domain $H^1_0(M, w^2 dx)$. Note that since $\lambda < 1$, it follows from Lemma \ref{lem-aux} that 
$$
\int^\infty \frac{dt}{\V_2(x, \sqrt{t})} \ < \infty \qquad \forall\ x\in M.
$$
Hence the manifold $(M, w^2 dx)$ is non-parabolic. Since $\partial K$ is compact and $(M, w^2 dx)$ satisfies the parabolic Harnack inequality, see the proof of Theorem \ref{thm-upperb}, we may apply  \cite[Thm.~3.1]{gs2} with 
$(M,\mu)= (M, w^2 dx)$ and $\Omega$ as above. The latter says that there exists $\eps>0$ such that
$$
\e^{-t \widehat A_\infty}(x,y) \, \geq \,C'\,  \e^{-c'\, t \widehat A_\sigma}(x,y), \qquad t>0,\  x,y \in M\setminus K_\eps,
$$
holds for some $c',\,  C' >0$. Here $\widehat A_\sigma$ is the operator in $L^2(M,w^2 dx)$ defined in \eqref{A-hat-sigma}. From \eqref{ahat-two-sided} and Lemma \ref{lem-aux} we thus obtain 
\begin{equation} \label{ahat-lowerb}
\e^{-t \widehat A_\infty}(x,y) \, \geq \,  \frac{C'\, C_1\, \e^{-\frac{c\, |x-y|^2}{t}}}{\V_2(x, \sqrt{t})}\, \geq \,   \frac{C'\, C_1\, \e^{-\frac{c\, |x-y|^2}{t}}}{\pi\, t \left(\log\left(\frac{|x|+ \sqrt{ t} }{\rho} \right)+\beta \right)^{1+\sqrt{1-\lambda}}}\, .
\end{equation}
To complete the proof it suffices to apply \eqref{hk-equal} to the right hand side of \eqref{ahat-lowerb}. 
\end{proof}

\begin{remark} \label{rem-parabolic}
For $\lambda= 1$ the weighted manifold $(M, w^2 dx)$ becomes parabolic, see \eqref{eq-w} and \eqref{a-b}. 
Hence the Dirichlet heat kernel $\e^{-t \widehat A_\infty}(x,y)$ in this case has a faster decay in $t$ than the upper bound \eqref{hk-upperb-2d}. Indeed, by \cite[Thm.~4.9]{gs2} 
$$
\e^{-t \widehat A_\infty}(x,x) \ \asymp\  \frac{1}{t\, \log t \, \log (\log t)} \qquad t\to\infty
$$
holds for all $x$ far enough from $K$. This forbids an extension of Proposition \ref{prop-lowerb} to the case $\lambda = 1$. 
\end{remark}

\begin{remark}
A slightly more general notion of Gaussian estimates for a semigroup with kernel $k(t,x,y)$ consists in the inequality 
\[
|k(t,x,y)|\le C t^{-\frac{d}{2}} e^{-\frac{|x-y|^2}{bt}} e^{\omega t}
\]
for some constants $b,c>0$ and $\omega\in \mathbb R$ and all $t>0$ as well as almost every $x,y$. The advantage of this formulation is that also semigroups with complex-valued kernels can be discussed. Now, it is well-known that the semigroup generated by a (formal) Schrödinger operator $\Delta-V$ with potential $V\in L^p_{\rm loc}$ such that ${\rm Re }V\ge 0$ admits a modulus semigroup (i.e., a minimal dominating semigroup), which is then generated by $\Delta-{\rm Re }V$. This suggests a slight generalisation of Proposition~\ref{prop-lowerb}: the estimate~\eqref{hk-lowerb} accordingly holds if the kernel $e^{-tH_\sigma (\lambda,U)}(x,y)$ on the left hand side is replaced by its complex absolute value, provided $U$ is a complex-valued potential that satisfies ${\rm Re }U(x)\ge U_\beta (x)$ for some $\beta>0$ and all $x\in M$.
\end{remark}

\smallskip

\subsection{A two-sided estimate} 

Here we provide a two-sided heat kernel estimate for $U=U_\sigma$.  

\begin{theorem} \label{cor-2-sided}
Let $K\subset\R^2$ be an open bounded and simply connected set with $C^{2}$ regular boundary. Let $0\le \lambda < 1$. 
Then there exist positive constants $C,c>0$ and $\eps>0$ such that for all $x,y\in M\setminus K_\eps$ and all $t>0$ we have
\begin{align} \label{eq-2-sided}
& \!\!\!\!\!\frac{ F_2(x,y;\lambda)\ \e^{-\frac{c\, |x-y|^2}{t}}}{C  t \left(\log\left(\frac{|x|+ \sqrt{t}}{\rho} \right)+ \frac{1}{\rho\, \sigma_0 }\right)^{1+\sqrt{1-\lambda}}}\,  \leq \, 
 \e^{-t H_\sigma(\lambda, \, U_\sigma)}(x,y)  
 \leq \frac{C\, F_2(x,y;\lambda)\ \e^{-\frac{ |x-y|^2}{ct}}}{ t \left(\log\left(\frac{|x|+ \sqrt{t}}{\rho} \right)+ \frac{1}{\rho\, \sigma_0 }\right)^{1+\sqrt{1-\lambda}}} , 
\end{align}
where 
$$
F_2(x,y;\lambda) := \left( \left(\log\frac{|x|}{\rho} + \frac{1}{\rho\, \sigma_0 }\right) \left(\log\frac{|y|}{\rho} + \frac{1}{\rho\, \sigma_0 }\right)\right)^{\frac{1+\sqrt{1-\lambda}}{2}} \ .
$$
\end{theorem}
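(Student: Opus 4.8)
The strategy is to combine the upper bound of Theorem \ref{thm-upperb} with the lower bound of Proposition \ref{prop-lowerb}, noting that the reference potential $U_\sigma$ defined in \eqref{ref-pot} is precisely the borderline case in which both hypotheses are met simultaneously. First I would observe that $U = U_\sigma$ obviously satisfies the hypothesis \eqref{cond-v} of Theorem \ref{thm-upperb} with equality (here $\sigma_0 > 0$ is needed for $U_\sigma$ to be well-defined and for the upper bound to apply), so the right-hand inequality in \eqref{eq-2-sided} is an immediate consequence of \eqref{hk-upperb-2d}, with the numerator there being exactly $C\, F_2(x,y;\lambda)$ after reading off $\alpha = (1+\sqrt{1-\lambda})/2$ from \eqref{a-b}. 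Likewise $U_\sigma$ satisfies the lower bound hypothesis \eqref{U-lowerb} with equality, upon choosing $\beta = 1/(\rho\sigma_0)$; hence Proposition \ref{prop-lowerb} applies and yields the left-hand inequality in \eqref{eq-2-sided} for all $x,y \in M\setminus K_\eps$ and all $t>0$, again with the matching factor $F_2(x,y;\lambda)$ in the numerator. The restriction $0 \le \lambda < 1$ is inherited from Proposition \ref{prop-lowerb}, and the exceptional neighborhood $K_\eps$ likewise comes from there; the $C^2$-regularity assumption on $\partial K$ is needed by both Theorem \ref{thm-upperb} and Proposition \ref{prop-lowerb}.

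The only genuine point requiring care is the bookkeeping of the two $\beta$-parameters. In Theorem \ref{thm-upperb} the weight is chosen via \eqref{a-b} as $\beta = \alpha/(\rho\sigma_0)$, which produces the factor $\bigl(\log(|x|/\rho) + 1/(\rho\sigma_0)\bigr)^\alpha$ in the numerator and $\bigl(\log((|x|+\sqrt t)/\rho) + 1/(\rho\sigma_0)\bigr)^{2\alpha}$ in the denominator — note that the ``$\beta$'' appearing inside the numerator of \eqref{hk-upperb-2d} is $1/(\rho\sigma_0)$, not $\beta = \alpha/(\rho\sigma_0)$, because one uses $\log(|x|/\rho) + \beta/\alpha \ge \log(|x|/\rho)$ only where it helps and otherwise absorbs constants; in any case $\log(|x|/\rho) + 1/(\rho\sigma_0)$ and $\log(|x|/\rho) + \alpha/(\rho\sigma_0)$ are comparable up to a constant depending only on $\lambda \in [0,1]$, so the two expressions may be freely interchanged at the cost of adjusting $C$. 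In Proposition \ref{prop-lowerb} one is free to take $\beta = 1/(\rho\sigma_0)$ directly, so the lower bound already comes in exactly the form displayed in \eqref{eq-2-sided}. I would therefore insert one sentence verifying that $U_\sigma$ meets \eqref{cond-v} and \eqref{U-lowerb} with the stated choices of $\beta$, and one sentence reconciling the constants.

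I do not anticipate a serious obstacle: the theorem is essentially a corollary of the two preceding results, and the main ``work'' is the elementary observation that $U_\sigma$ is simultaneously an upper and a lower reference potential, together with the remark that the logarithmic prefactors in \eqref{hk-upperb-2d} and \eqref{hk-lowerb} agree once one sets $\beta = 1/(\rho\sigma_0)$ and substitutes $\alpha = (1+\sqrt{1-\lambda})/2$. The structure of the argument is: (i) apply Theorem \ref{thm-upperb} with $U = U_\sigma$ to get the upper estimate; (ii) apply Proposition \ref{prop-lowerb} with $U = U_\sigma$, $\beta = 1/(\rho\sigma_0)$ to get the lower estimate on $M\setminus K_\eps$; (iii) collect the two bounds, absorbing the mismatch between $\alpha/(\rho\sigma_0)$ and $1/(\rho\sigma_0)$ into the constant $C$, which is harmless because $\lambda$ ranges over a compact interval. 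The transience/recurrence dichotomy mentioned after the statement of Theorem \ref{cor-2-sided} then follows by integrating the two-sided bound in $t$ and using Lemma \ref{lem-aux}, but that is not part of the present statement and I would leave it to a remark.
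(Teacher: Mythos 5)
Your proposal is correct and matches the paper's own (one-line) proof, which simply cites Theorem~\ref{thm-upperb} and Proposition~\ref{prop-lowerb} applied to $U=U_\sigma$. Your extra remark on the bookkeeping between $\beta=\alpha/(\rho\sigma_0)$ (used internally in \eqref{a-b}) and the $1/(\rho\sigma_0)$ appearing in the displayed bounds is a sound observation — the two are comparable uniformly in $\lambda\in[0,1]$ since $\alpha\in[1/2,1]$ — but it is already absorbed into the constants in Theorem~\ref{thm-upperb} and Proposition~\ref{prop-lowerb}, so no further argument is needed at this stage.
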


\smallskip

\begin{proof}
The claim follows from Theorem \ref{thm-upperb} and Proposition \ref{prop-lowerb}. 
\end{proof}

\noindent Note that both upper and lower bound in \eqref{eq-2-sided} are decreasing functions of $\sigma_0$. 

\begin{remark}
For small times the diagonal element of the behavior of the heat kernel is not affected by the presence of the boundary, neither by the potential $U_\sigma$. In fact
$$
\e^{-t H_\sigma(\lambda, \, U_\sigma)}(x,x) \ \asymp \ t^{-1} \qquad t\to 0.
$$
On the other hand, for large times we hav 
$$
\e^{-t H_\sigma(\lambda, \, U_\sigma)}(x,x) \ \asymp \ t^{-1}\, ( \log t)^{-1-\sqrt{1-\lambda}}  \qquad t\to \infty.
$$
\end{remark}

\begin{remark}
Consider the case \underline{$\sigma=\, $const} and \underline{$\lambda=0$}. To simplify the notation we write $H_\sigma$ instead of $H_\sigma(0, U_\sigma)$. By domination of semigroups we then have 
\begin{equation} 
\e^{-t H_D }(x,x) \, \leq \, \e^{-t H_\Sigma}(x,x)\, \leq \, \e^{-t H_{\sigma}}(x,x)\, \leq \, \e^{-t H_0}(x,x)\, \qquad x\in M\setminus K_\eps \ ,
\end{equation}
for all $0< \sigma < \Sigma$. By passing to the limit $\sigma\to 0$ in \eqref{eq-2-sided},  for a fixed $x \in M\setminus K_\eps$, we thus obtain 
\begin{equation}  \label{neumann-bc}
\e^{-t H_0}(x,x)  \  \asymp \ t^{-1}  \hspace{1.5 cm} \text{(Neumann boundary conditions)}
\end{equation}
\end{remark}

\smallskip

\begin{remark}
There are two reasons why Theorem \ref{cor-2-sided}  is not completely satisfactory. First, the lower bound is non-zero only for $x$ far enough from $K$. This is because we use the Dirichlet heat kernel as a bound from below, see \eqref{lowerb-1} and \eqref{hk-equal}. Second, it does not cover the critical case $\lambda = 1$, see Remark \ref{rem-parabolic} for details.
Both these artifacts can be removed in the special case when $K=B(0, \rho)$ and $\sigma$ is constant. This suggests that the assertion of Theorem~\ref{cor-2-sided} might actually be improved.
\end{remark}

\subsubsection*{\bf Example: a ball with constant $\sigma$} 

\begin{proposition} \label{prop-ball}
Let $K=B(0,\rho),  \sigma=\sigma_0$ and let $U$ be as in Theorem \ref{cor-2-sided}. Then the two-sided estimate \eqref{eq-2-sided} holds for all $x,y\in M$ and all $\lambda\in [0, 1]$.
\end{proposition}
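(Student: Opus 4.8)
\textbf{Proof plan for Proposition \ref{prop-ball}.}
The idea is to revisit the two places where the general proof (Theorem \ref{thm-upperb} and Proposition \ref{prop-lowerb}) loses precision, namely the reduction to a star-shaped representation of $K$ in \eqref{K-repr} and the use of the Dirichlet heat kernel as a lower bound near $\partial K$, and to observe that both simplifications become exact equalities when $K=B(0,\rho)$ and $\sigma\equiv\sigma_0$. First I would record that in this case the polar-coordinate form \eqref{Q-polar} degenerates: $N(\theta)=1$, $S_1(\theta)\equiv R_0(\theta)\equiv\rho$ for all $\theta$, so the integration by parts \eqref{radial-der} produces only the single boundary term at $r=\rho$, with weight $\sqrt{\rho^2+0}=\rho$. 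With the choice \eqref{a-b} of $\alpha$ and $\beta$ and the \emph{equality} $U=U_\sigma$ one checks that the boundary contributions exactly cancel against the Robin term $\int_{\partial M}\sigma_0\,|wf|^2\,dS=\sigma_0\rho\int_0^{2\pi}(wf)^2(\rho,\theta)\,d\theta$, because at $r=\rho$ we have $\log(\rho/\rho)+\beta=\beta=\alpha/(\rho\sigma_0)$ and hence $\alpha\beta^{2\alpha-1}=\sigma_0\rho\,\beta^{2\alpha}$; simultaneously the term $\lambda\int U_\sigma fg\,w^2\,dx$ cancels the remaining interior term $(\alpha-\alpha^2)\int r^{-1}(\log(r/\rho)+\beta)^{2\alpha-2}fg\,dr$ since $\alpha-\alpha^2=-\lambda/4$. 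Thus \eqref{form-dominance} holds with \emph{equality}, i.e. $Q_\sigma[wf,wg]-\lambda\int U_\sigma fg\,w^2\,dx=\widehat Q_\sigma[f,g]$, and consequently $H_\sigma(\lambda,U_\sigma)$ is unitarily equivalent, via $u=wf$, to the weighted Neumann Laplacian $\widehat A_\sigma$ on $(M,w^2\,dx)$ — no domination, no loss.

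With this exact identification in hand, the two-sided heat kernel bound for $\widehat A_\sigma$ in \eqref{ahat-two-sided}, which was already established in the proof of Theorem \ref{thm-upperb} (the weighted manifold $(M,w^2\,dx)$ satisfies the parabolic Harnack inequality and the volume lower bound of Lemma \ref{lem-aux}), transfers directly through \eqref{eq-kernels}:
\begin{equation*}
\e^{-tH_\sigma(\lambda,U_\sigma)}(x,y)=w(x)\,w(y)\,\e^{-t\widehat A_\sigma}(x,y)\asymp\frac{w(x)\,w(y)\,\e^{-c|x-y|^2/t}}{\V_2(x,\sqrt t)}\ .
\end{equation*}
Combining this with the matching upper and lower volume estimates for $\V_2(x,\sqrt t)$ from Lemma \ref{lem-aux} (and the volume doubling remark), and recalling $w(x)=(\log(|x|/\rho)+1/(\rho\sigma_0))^{\alpha}$, yields precisely the two-sided bound \eqref{eq-2-sided}, now valid for \emph{all} $x,y\in M$ rather than only on $M\setminus K_\eps$, because no Dirichlet comparison is used. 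Note also that the upper bound half of \eqref{ahat-two-sided} requires no restriction $\lambda<1$; the lower bound half likewise holds for all $\lambda\in[0,1]$ since it is the Harnack/volume estimate for the fixed operator $\widehat A_\sigma$, which does not degenerate as $\lambda\to 1$ (only the auxiliary Dirichlet operator $\widehat A_\infty$ does, cf.\ Remark \ref{rem-parabolic}). Hence the case $\lambda=1$ is included.

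The main point to be careful about is the algebra of the cancellation: one must verify that with $\alpha=\beta\rho\sigma_0$ (equivalently the stated choice \eqref{a-b}) the boundary term $\alpha(\log(\rho/\rho)+\beta)^{2\alpha-1}(fg)(\rho,\theta)$ arising from \eqref{radial-der} exactly matches the Robin surface integral, and that the relation $\alpha-\alpha^2=-\lambda/4$ — which is just $\alpha=(1+\sqrt{1-\lambda})/2$ rewritten — makes the surviving interior integral equal to $\lambda\int_M U_\sigma fg\,w^2\,dx$ when $U=U_\sigma$; here one uses $w^2(x)=(\log(|x|/\rho)+\beta)^{2\alpha}$ and $U_\sigma(x)=\tfrac14|x|^{-2}(\log(|x|/\rho)+\beta)^{-2}$ so that $U_\sigma w^2=\tfrac14|x|^{-2}(\log(|x|/\rho)+\beta)^{2\alpha-2}$, matching the integrand of the leftover term up to the constant $\alpha-\alpha^2=-\lambda/4$. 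Once this identity is confirmed, the rest is a direct invocation of results already proved in the paper, with the Dirichlet-comparison step simply deleted. A brief remark should also confirm that the form domains match: for $K$ a ball the $C^2$ hypothesis of Theorem \ref{thm-upperb} is trivially satisfied, $H^1(M,w^2\,dx)$ is the common form domain, and the substitution $u=wf$ is an isometric bijection of form domains, so the unitary equivalence is genuine and global.
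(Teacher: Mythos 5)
Your proposal is correct and follows the same route as the paper: for the ball with constant $\sigma$ and $U=U_\sigma$, inequality \eqref{form-dominance} becomes the identity \eqref{form-ball}, and the two-sided bound then follows directly from \eqref{ahat-two-sided} and Lemma \ref{lem-aux} without any Dirichlet comparison, hence for all $x,y\in M$ and all $\lambda\in[0,1]$. One small sign slip: with $\alpha=(1+\sqrt{1-\lambda})/2$ one has $\alpha-\alpha^2=\lambda/4$ (not $-\lambda/4$), which is exactly what makes the leftover interior term in \eqref{radial-der} match $\lambda\int_M U_\sigma f g\, w^2\, dx$.
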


\begin{proof}
Take $w$ as in \eqref{eq-w} with $\alpha$ and $\beta$ given by \eqref{a-b}. We then obtain the identity 
\begin{equation} \label{form-ball}
Q_\sigma[w f, w g] -  \lambda \int_M U \overline{f} g\, w^2 dx\, =\,  \widehat Q_\sigma[f,g],  
\end{equation}
where $\widehat Q_\sigma[ \cdot\, , \cdot ]$ is defined in \eqref{form-dominance}. The above equation holds for all functions $f,g \in H^1(M, w^2 dx)$. Hence 
$$
 \e^{-t H_\sigma(\lambda, U)}(x,y) = w(x)\, w(y)\, \e^{-t \widehat A_\sigma}(x,y),\qquad \forall\ x,y\in M, \quad t>0.
$$
The claim now follows from \eqref{ahat-two-sided} and Lemma \ref{lem-aux}. 
\end{proof}

\smallskip

\section{\bf Applications}
\label{sec-appl}
\noindent In this section we will apply the heat kernel bounds obtained in section \ref{sec-upperb} to establish spectral estimates for 
two-dimensional Schrödinger operators on exterior domains. We begin with a simple but important consequence of the proof of Theorem \ref{thm-upperb}. 
\subsection{A Hardy inequality}

\begin{proposition} \label{prop-hardy}
In addition to the Assumption~\ref{ass-U}, let $K\subset\R^2$ have $C^{2}$-regular boundary. Then for all $u\in H^1(M)$ it holds
\begin{equation} \label{eq-hardy}
Q_\sigma[u,u]  \, \geq\, \frac 14 \int_M  \frac{|u(x)|^2}{|x|^2 \left( \log \frac{|x|}{\rho} +\frac{1}{2 \rho\,  \sigma_0 }\right)^{2}}\, dx,
\end{equation}
where $Q_\sigma[\cdot\, , \cdot]$ is given by \eqref{q-form}.  Moreover, the above inequality fails if we replace the constant $\frac 14$ on the right hand side by any constant $C> \frac 14$.  
\end{proposition}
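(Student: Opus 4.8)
The plan is to extract the Hardy inequality directly from the computation already carried out in the proof of Theorem~\ref{thm-upperb}, specialized to the parameter choice corresponding to $\lambda=1$. Recall that the key algebraic identity there was the radial integration by parts~\eqref{radial-der}, valid for all $\alpha,\beta\in(0,\infty)$ and all positive $f,g\in H^1(M,w^2dx)$ with $w(x)=(\log(|x|/\rho)+\beta)^\alpha$. Taking $f=g$ (so $u=wf$) and summing over all the radial intervals in the representation~\eqref{K-repr}, together with the boundary contributions in~\eqref{Q-polar}, yields an identity of the form
\begin{equation*}
Q_\sigma[u,u] = \int_M |\nabla f|^2\, w^2\, dx + (\alpha-\alpha^2)\int_M \frac{w^2}{|x|^2\,(\log(|x|/\rho)+\beta)^2}\, |f|^2\, dx + (\text{boundary terms}),
\end{equation*}
where the boundary terms combine the genuine Robin term $\int_{\partial M}\sigma |u|^2\,dS$ with the terms $\pm\alpha(\log(R_j/\rho)+\beta)^{2\alpha-1}(fg)(R_j,\theta)$ and their $S_j$ analogues coming from~\eqref{radial-der}. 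First I would carry out exactly this summation and verify that, as in the proof of Theorem~\ref{thm-upperb}, the choice $\beta = \frac{1}{2\rho\sigma_0}$ — i.e.\ $\beta=\alpha/(\rho\sigma_0)$ with $\alpha=\tfrac12$, which is~\eqref{a-b} at $\lambda=1$ — makes these boundary terms nonnegative. Indeed on each boundary piece one needs $\sigma(R_j,\theta)\sqrt{R_j^2+(R_j')^2} \ge \alpha(\log(R_j/\rho)+\beta)^{-1}$ (and similarly on the $S_j$), and since $\sqrt{R_j^2+(R_j')^2}\ge R_j\ge\rho$, $\sigma\ge\sigma_0$, and $\log(R_j/\rho)+\beta\ge\beta$, it suffices that $\sigma_0\rho \ge \alpha/\beta$, which holds with equality for the stated $\beta$.

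With $\alpha=\tfrac12$ the coefficient $\alpha-\alpha^2 = \tfrac14$, and since $w^2|f|^2 = |u|^2$, dropping the nonnegative gradient term and the nonnegative boundary terms gives precisely~\eqref{eq-hardy}. The only point requiring a little care is that~\eqref{radial-der} was derived for functions factored through a weighted Sobolev space; here $u\in H^1(M)$ is arbitrary, so I would first prove the inequality for $u$ in a dense class (e.g.\ $u\in C_c^\infty(\overline M)$, or $u$ such that $u/w\in H^1(M,w^2dx)$) and then pass to the limit using the continuity of $Q_\sigma$ on $H^1(M)$ (cf.~\eqref{form-upperb} with $\lambda=0$) together with Fatou's lemma on the right-hand side; one should also note the right-hand integral is finite for $u\in H^1(M)$ by Assumption~\ref{ass-U}-type reasoning, or simply allow it to be $+\infty$ in which case there is nothing to prove for such $u$.

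For the optimality of the constant $\tfrac14$, the plan is the standard Hardy-type argument: test~\eqref{eq-hardy} against a sequence $u_n$ that mimics the formal minimizer $u_\ast(x) = (\log(|x|/\rho)+\tfrac{1}{2\rho\sigma_0})^{1/2}$, which is exactly the weight $w$ with $\alpha=\tfrac12$. This $u_\ast$ is not in $H^1(M)$ (its Dirichlet energy diverges logarithmically at infinity), so one truncates: set $u_n(x) = w(x)\,\phi_n(|x|)$ where $\phi_n$ is a logarithmic cutoff equal to $1$ on an annulus $\rho\le |x|\le R_n$ and decaying to $0$ on $R_n\le|x|\le R_n^2$, with $R_n\to\infty$. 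Plugging into the identity above, the boundary terms vanish for the piece near $\partial M$ once $\phi_n\equiv 1$ there (they are nonnegative anyway), the Hardy term on the left contributes exactly $\tfrac14\int_M |u_n|^2/(|x|^2(\log(|x|/\rho)+\beta)^2)dx$, and one checks that $\int_M|\nabla f_n|^2 w^2\,dx = \int_M |\nabla(w\phi_n)|^2\,dx - \tfrac14\int \cdots$ has its gradient part $\int_M |\phi_n'|^2 w^2\,dx \to 0$ relative to the diverging Hardy integral $\int_M |u_n|^2/(|x|^2(\log+\beta)^2)\,dx \sim \log\log R_n$ (or rather $\sim \log R_n$ depending on the cutoff scale — a logarithmic cutoff on $[R_n,R_n^2]$ is the right choice to beat the weight). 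Hence $Q_\sigma[u_n,u_n]\big/\int_M |u_n|^2/(|x|^2(\log(|x|/\rho)+\beta)^2)dx \to \tfrac14$, showing no constant $C>\tfrac14$ works. The main obstacle is bookkeeping the boundary terms in the star-shaped-components decomposition~\eqref{K-repr} and confirming the sign; everything else is the classical Hardy machinery, with the choice of cutoff exponent being the one delicate quantitative point in the optimality half.
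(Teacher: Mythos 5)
Your derivation of the inequality~\eqref{eq-hardy} matches the paper's own route: both set $\alpha=\tfrac12$, $\beta=\tfrac{1}{2\rho\sigma_0}$ in the weight $w$ from~\eqref{eq-w} and read the Hardy inequality off the identity underlying~\eqref{form-dominance} at $\lambda=1$ (the paper literally says ``Inequality~\eqref{eq-hardy} then follows immediately from~\eqref{form-dominance} applied with $\lambda=1$''). Your careful check that the inner boundary terms produced by the integration by parts in~\eqref{radial-der} are absorbed by the Robin contribution provided $\rho\sigma_0\geq\alpha/\beta$ — using $\sigma\ge\sigma_0$, $\sqrt{R_j^2+(R_j')^2}\ge R_j\ge\rho$ and $\log(R_j/\rho)+\beta\ge\beta$ — is exactly the (unwritten) content of the paper's remark ``Keeping in mind~\eqref{cond-v} and $R_j(\theta)\ge\rho$''; only the boundary points where the normal points toward the origin produce a negatively-signed term, and it is there that the Robin term must dominate.

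For sharpness, your argument genuinely differs from the paper's. The paper specialises to $K=B(0,1)$ with $\sigma=\sigma_0$ constant, uses the exact identity~\eqref{sharp} (no slack from boundary terms), and plugs in the explicit sequence $f_n(r)=\log\bigl(\log(1-\log(r/n))+1\bigr)$, checking by direct computation that the right-hand side of~\eqref{sharp} is eventually negative whenever $C>\tfrac14$. You instead use the quasi-minimizer $u_n=w\,\phi_n$, with $\phi_n$ equal to $1$ on $\{\rho\le|x|\le R_n\}$ and linear in $\log|x|$ down to $0$ on $\{R_n\le|x|\le R_n^2\}$. That works, and in fact for general $C^2$-bounded $K$: the boundary terms are nonnegative and, once $\phi_n\equiv1$ near $\partial M$, independent of $n$; the gradient contribution $\int_M|\nabla\phi_n|^2w^2\,dx$ converges to $3\pi$, hence stays bounded; while the Hardy integral in the denominator is $2\pi\log\bigl(\log(R_n/\rho)+\beta\bigr)-2\pi\log\beta+O(1)$, i.e.\ $\sim2\pi\log\log R_n\to\infty$. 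So the Rayleigh quotient tends to $\tfrac14$. The one imprecision in your write-up is the parenthetical ``or rather $\sim\log R_n$'': the Hardy integral is genuinely of order $\log\log R_n$, not $\log R_n$, and the gradient term does \emph{not} tend to zero on its own — it is the double-logarithmic divergence of the denominator, not the cutoff scale, that carries the argument. Your test function is nevertheless simpler and more transparent than the paper's iterated-log choice, and it has the minor bonus of not requiring $K$ to be a ball.
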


\begin{proof}
As above we write 
\begin{equation} \label{fact-2}
u(x) =  \left( \log \frac{|x|}{\rho} +\frac{1}{2 \rho  \sigma_0 }\right)^{\frac 12}\, f(x)\, .
\end{equation}
Inequality \eqref{eq-hardy} then follows immediately from \eqref{form-dominance} applied with $\lambda=1$. To prove the sharpness of the constant $1/4$ we consider the example $K=B(0,1)$ and $\sigma=\sigma_0>0$ treated already in Proposition \ref{prop-ball}. Using the factorization \eqref{fact-2} with a radial test function $u$ we then obtain the following identity;
\begin{align} \label{sharp}
Q_\sigma[u,u] & - C\int_M  \frac{|u(x)|^2}{|x|^2 \left( \log \frac{|x|}{\rho} +\frac{1}{2 \rho\,  \sigma_0 }\right)^{2}}\ dx  =
2\pi \int_1^\infty (f'(r))^2\ r\,  \left( \log r +\frac{1}{2 \rho\,  \sigma_0 }\right)\, dr  \nonumber\\
&\qquad\qquad \qquad\qquad -2\pi \left(C-\frac 14\right) \int_1^\infty f^2(r)\, r^{-1}\, \left( \log r +\frac{1}{2 \rho\,  \sigma_0 }\right)^{-1}\, dr. 
\end{align}
If we now set
$$
f(r)= f_n(r) = 
\left\{
\begin{array}{l@{\qquad}l}
\log \left(\, \log \left(1-\log \frac{r}{n}\right) +1\right) & \mathrm{if}\  r\leq n \, , \\
  &  \\
0  &  \mathrm{if}\ n < r\, 
 \end{array}
\right.
 \qquad  n\in\N,
$$ 
then $f_n\in H^1(M)$ and a direct calculation shows that the right hand side of \eqref{sharp} is negative for $n$ large enough whenever $C > 1/4$. 
\end{proof}

\begin{remark} 
Hardy-type inequalities for Laplace operators with Robin boundary conditions were recently established in \cite{kl}.  Among other things it was  shown in \cite[Thm.~5.1]{kl} that for a constant $\sigma$ the inequality
\begin{equation} \label{hardy-kl}
Q_\sigma[u,u] \, \geq\,  \frac 14 \int_{M} \left(\Big (|x|-\rho+\frac{1}{2\sigma}\Big)^{-2} +\frac{(d-1)(d-3)}{|x|^2}\right) |u(x)|^2\, dx
\end{equation}
holds for all $u\in H^1(M)$, where $M=\R^d\setminus B(0,\rho)$. 
Note however, for $d=2$ the integral weight on the right hand side of \eqref{hardy-kl} is positive only for $\sigma$ large enough. Moreover, still for $d=2$, this weight decays as $|x|^{-3}$ for $|x| \to\infty$, whereas the integral weight in \eqref{eq-hardy} has the optimal decay rate $|x|^{-2}\, (\log |x|)^{-2}$.
\end{remark}

\begin{corollary} \label{cor-hardy}
In addition to the Assumption~\ref{ass-U}, let $K\subset\R^2$ have $C^{2}$-regular boundary. Then for all $u\in H_0^1(M)$ it holds
\begin{equation} \label{eq-hardy-dir}
\int_M |\nabla u|^2\, dx  \, \geq\, \frac 14 \int_M  \frac{|u(x)|^2}{|x|^2}\  \left( \log \frac{|x|}{\rho} \right)^{-2}\ dx,
\end{equation}
with the sharp constant $1/4$.
\end{corollary}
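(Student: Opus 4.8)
The plan is to deduce Corollary~\ref{cor-hardy} from Proposition~\ref{prop-hardy} by passing to the Dirichlet limit $\sigma_0\to+\infty$, exactly in the spirit of the proof of Theorem~\ref{cor-dirichlet}. First I would observe that for $u\in H^1_0(M)$ the boundary term in $Q_\sigma[u,u]$ vanishes, so $Q_\sigma[u,u]=\int_M|\nabla u|^2\,dx$ regardless of $\sigma$. Applying \eqref{eq-hardy} with $\sigma_0=\sigma$ arbitrarily large and letting $\sigma\to+\infty$, the weight $|x|^{-2}\bigl(\log\frac{|x|}{\rho}+\frac{1}{2\rho\sigma}\bigr)^{-2}$ increases pointwise and monotonically to $|x|^{-2}\bigl(\log\frac{|x|}{\rho}\bigr)^{-2}$ on $M$ (recall $|x|>\rho$ on $M$ by \eqref{inner-ball}, so the logarithm is positive). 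By monotone convergence, $\int_M|\nabla u|^2\,dx\ge\frac14\int_M\frac{|u(x)|^2}{|x|^2}\bigl(\log\frac{|x|}{\rho}\bigr)^{-2}\,dx$ for every $u\in H^1_0(M)$, which is \eqref{eq-hardy-dir}. Alternatively, and perhaps more cleanly, one can repeat the factorization argument directly: write $u=w\,f$ with $w(x)=\bigl(\log\frac{|x|}{\rho}\bigr)^{1/2}$ (i.e.\ $\alpha=\tfrac12$, $\beta=0$) and note that for $u\in H^1_0(M)$ the boundary terms produced by the integration by parts in \eqref{radial-der} drop out, so the computation leading to \eqref{form-dominance} with $\lambda=1$, $U\equiv 0$ reduces to the identity $\int_M|\nabla u|^2\,dx=\int_M|\nabla f|^2\,w^2\,dx+\frac14\int_M\frac{|u|^2}{|x|^2}(\log\frac{|x|}{\rho})^{-2}\,dx\ge\frac14\int_M\frac{|u|^2}{|x|^2}(\log\frac{|x|}{\rho})^{-2}\,dx$.

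For the sharpness of the constant $\tfrac14$, I would argue again by reduction to the radial model case $K=B(0,1)$, exactly as in Proposition~\ref{prop-hardy}. With $\rho=1$ and a radial test function $u=\bigl(\log r\bigr)^{1/2}f(r)$, one gets the analogue of \eqref{sharp}:
\begin{equation*}
\int_M|\nabla u|^2\,dx-C\int_M\frac{|u(x)|^2}{|x|^2(\log|x|)^2}\,dx=2\pi\int_1^\infty (f'(r))^2\,r\,\log r\,dr-2\pi\Bigl(C-\tfrac14\Bigr)\int_1^\infty f^2(r)\,r^{-1}(\log r)^{-1}\,dr.
\end{equation*}
Then I would plug in the same truncated family $f_n$ used in Proposition~\ref{prop-hardy} (with $\rho\sigma_0$ effectively replaced by $\infty$, i.e.\ the constant $\tfrac{1}{2\rho\sigma_0}$ simply set to $0$) and check that for any $C>\tfrac14$ the right-hand side becomes negative for $n$ large; this is the standard logarithmic-cascade optimizer for the one-dimensional Hardy weight $(\log r)^{-2}$ and the estimate is essentially identical to the one already carried out. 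One only has to make sure $f_n\in H^1_0(M)$: it is compactly supported and vanishes near $\partial M=\{|x|=1\}$ since $f_n(1)=\log(\log 1+1)=0$, and it is Lipschitz on its support, so indeed $f_n\in H^1_0(M)$.

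The only genuine point requiring care — the ``main obstacle,'' such as it is — is verifying that the test functions $f_n$ (whether taken from Proposition~\ref{prop-hardy} or built afresh) actually lie in $H^1_0(M)$ rather than merely $H^1(M)$, i.e.\ that they vanish in trace sense on $\partial M$; for the ball this is immediate from $f_n(1)=0$, and for the sharpness statement that is the only case one needs. Everything else is either an application of monotone convergence or a verbatim repetition of a computation already performed in the paper, so no new analytic input is needed.
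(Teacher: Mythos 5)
Your argument for the inequality \eqref{eq-hardy-dir} is correct and is exactly the paper's argument: apply \eqref{eq-hardy} to $u\in H^1_0(M)$ (where $Q_\sigma[u,u]=\int_M|\nabla u|^2\,dx$), then pass to the limit $\sigma_0\to\infty$ by monotone convergence, using that $\bigl(\log\tfrac{|x|}{\rho}+\tfrac{1}{2\rho\sigma_0}\bigr)^{-2}$ increases pointwise to $\bigl(\log\tfrac{|x|}{\rho}\bigr)^{-2}$ on $M$. Your alternative direct factorization argument with $w=(\log\tfrac{|x|}{\rho})^{1/2}$ is also fine.

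However, your sharpness verification contains a concrete error. You claim $f_n(1)=\log(\log 1+1)=0$, but evaluating the paper's test function at $r=1$ gives
\[
f_n(1)=\log\Bigl(\log\bigl(1-\log\tfrac{1}{n}\bigr)+1\Bigr)=\log\bigl(\log(1+\log n)+1\bigr)\neq 0 \qquad\text{for } n>1;
\]
you have in fact computed $f_n(n)$, not $f_n(1)$. This is not a cosmetic slip. The object that must lie in $H^1_0(M)$ is $u_n=w\,f_n$ with $w(r)=(\log r)^{1/2}$, and if $f_n(1)=c\ne 0$ then near $r=1$ one has $u_n(r)\sim c\,(r-1)^{1/2}$, so $u_n'(r)\sim \tfrac{c}{2}(r-1)^{-1/2}$ and $\int_1^{1+\eps}|u_n'|^2\,r\,dr=+\infty$. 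Thus $u_n\notin H^1(M)$, let alone $H^1_0(M)$, and the identity you wish to exploit cannot even be written for these test functions. (Note that this issue does not arise in Proposition~\ref{prop-hardy}, because there $w(1)=(\tfrac{1}{2\rho\sigma_0})^{1/2}>0$, so the extra derivative term stays integrable.) To repair the sharpness argument you would need to modify the test functions so that $f_n$ (hence $u_n$) vanishes sufficiently fast at $r=1$, e.g.\ multiply $f_n$ by an inner Lipschitz cutoff that is $0$ on $[1,1+\tfrac1m]$ and $1$ on $[1+\tfrac2m,\infty)$, pass to $m\to\infty$ first and then $n\to\infty$; the leading logarithmic terms in the analogue of \eqref{sharp} are unaffected by such a cutoff.
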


\begin{proof}
In view of the monotone convergence theorem the claim follows by applying \eqref{eq-hardy} to $u\in H_0^1(M)$ and letting $\sigma_0\to \infty$. 
\end{proof}

\subsection{Hardy-Lieb-Thirring inequalities}  

It is well known that the Laplace operator satisfies, in the sense of quadratic forms on $H^1(\R^d)$, the Hardy inequality
\begin{equation}
-\Delta \, \geq \, \frac{(d-2)^2}{4 |x|^2}\qquad d\geq 3,
\end{equation}
with the sharp constant $(d-2)^2/4$. Motivated by this fact Ekholm and Frank established in \cite{ef} the so-called Hardy-Lieb-Thirring inequalities, i.e.~estimates for the moments of negative eigenvalues $\{-\lambda_j(V)\}$ of a Schr\"odinger operator $-\Delta-\frac{(d-2)^2}{4 |x|^2}-V$ in terms of a suitable $L^p-$ norm of $V$. More precisely, they proved that 
\begin{equation} \label{hlt-ef}
\tr \left(-\Delta-\frac{(d-2)^2}{4 |x|^2}-V \right)_-^\gamma = \sum_j \lambda_j(V)^\gamma \ \leq\ C_{d,\gamma} \int_{\R^d} V(x)_+^{\gamma+\frac d2}\, dx \qquad \hbox{if }d\geq 3
\end{equation}
holds true for all $\gamma>0$ and  some constant $C_{d,\gamma}$ independent of $V$, see also \cite{fr}. This improves considerably the classical Lieb-Thirring estimates,  \cite{LT}, by the presence of the negative factor $-\frac{(d-2)^2}{4 |x|^2}$ on the left hand side. 

\medskip

\noindent Now, our Corollary \ref{cor-hardy} shows that the inequality
$$
-\Delta_D \, \geq \, U_\infty 
$$
holds in the sense of quadratic forms on $H_0^1(\R^2\setminus K)$.  Since the constant $\frac 14$ is sharp, it is natural to ask whether an analog of \eqref{hlt-ef} holds for the operator 
$$
H_D\left(1 , U_\infty\right) = -\Delta_D- U_\infty
$$
in $L^2(M)$ with Dirichlet boundary conditions. With the help of Theorem \ref{cor-dirichlet} we obtain 

\begin{theorem} \label{thm-hlt}
Let  $R_{\rm in}(K)=\rho$. For every $\gamma>0$ there exists $C(\gamma,\rho)$ such that 
\begin{equation} \label{hlt}
\tr \left(-\Delta_D -U_\infty -V \right)_-^\gamma \ \leq\ C(\gamma,\rho)  \int_{M} V_+(x)^{\gamma+1}\, dx 
\end{equation}
holds true for all $V\in L^{\gamma+1}(M)$. 
\end{theorem}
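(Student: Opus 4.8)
The plan is to derive the Hardy--Lieb--Thirring bound \eqref{hlt} from the heat kernel upper bound of Theorem \ref{cor-dirichlet} via the standard semigroup route for Lieb--Thirring type inequalities, namely the Lieb--Aizenman--Lieb argument combined with ultracontractivity estimates. First I would recall that the operator in question is $H_D(1,U_\infty)=-\Delta_D-U_\infty$, which by Corollary \ref{cor-hardy} is non-negative, and that Theorem \ref{cor-dirichlet} with $\lambda=1$ gives
\[
\e^{-t H_D(1, U_\infty)}(x,y) \ \leq \ \frac{C\, \left(\log\frac{|x|}{\rho}\right)^{\frac12}\, \left(\log\frac{|y|}{\rho}\right)^{\frac12}\ \e^{-\frac{|x-y|^2}{ct}}}{t\left(\log\frac{|x|+\sqrt t}{\rho}\right)^{2}}\ ,
\]
and in particular, putting $x=y$ and using that $\log\frac{|x|}{\rho}\le \log\frac{|x|+\sqrt t}{\rho}$, the on-diagonal bound
\[
\e^{-t H_D(1,U_\infty)}(x,x)\ \leq\ \frac{C}{t\,\log\frac{|x|+\sqrt t}{\rho}}\ .
\]
This is the input that replaces the classical $t^{-d/2}$ on-diagonal bound; the extra logarithmic gain is what carries the Hardy term.

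The second step is to pass from this heat kernel bound to a bound on $\e^{-t H_D(1,U_\infty)}(x,x)$ that is a pure function of $t$ but \emph{integrable in a weighted sense}, or more directly, to invoke the abstract machinery that turns a weighted ultracontractivity estimate into a Lieb--Thirring inequality. Concretely, I would use the result (due in this generality to Lieb and refined by Frank--Lieb--Seiringer and others, see also the diamagnetic/semigroup approach of \cite{ef,fr}) that if $A\ge 0$ satisfies $\e^{-tA}(x,x)\le G(t,x)$ pointwise, then for $\gamma>0$,
\[
\tr(A-V)_-^\gamma\ \leq\ C_\gamma \int_M \int_0^\infty \e^{t V(x)}\, \frac{G(t,x)}{t}\, \frac{dt}{t^{?}}\cdots\, dx,
\]
the cleanest packaging being: by the Birman--Schwinger principle and the bound $\tr(A-V)_-^\gamma \le \frac{\gamma}{\Gamma(\gamma+1)}\int_0^\infty \frac{1}{t}\,\e^{-tA}\big(\e^{tV}-1\big)\cdots$; in practice I would follow Frank's variant, where one writes $\tr(A-V)_-^\gamma = c_\gamma\int_0^\infty s^{\gamma-1}\tr(e^{-s(A-V)}-\text{(free part)})\,ds$ and controls the trace via $\tr\,e^{-s(A-V)}\le \int_M e^{-sA}(x,x)\,e^{sV_+(x)}\,dx$ through the Trotter product formula and the pointwise heat kernel bound. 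Combining this with $\e^{-sA}(x,x)\le C s^{-1}(\log\frac{|x|+\sqrt s}{\rho})^{-1}$ reduces \eqref{hlt} to the pointwise-in-$x$ elementary estimate
\[
\int_0^\infty s^{\gamma-1}\,\frac{1}{s\,\log\frac{|x|+\sqrt s}{\rho}}\, e^{s V_+(x)}\, ds\ \leq\ C(\gamma,\rho)\, V_+(x)^{\gamma+1}\ ,
\]
where the crucial point is that the $\log$ in the denominator, evaluated at $s\sim V_+(x)^{-1}$ where the integrand concentrates, is bounded below by a positive constant \emph{uniformly in} $|x|\ge\rho$ (since $\log\frac{|x|+\sqrt s}{\rho}\ge\log\frac{|x|}{\rho}\ge 0$ and in fact $\ge \log\frac{\rho+\sqrt s}{\rho}$), so it only helps and never hurts; the scaling $s\mapsto s/V_+(x)$ then yields the power $V_+(x)^{\gamma+1}$ with a constant depending only on $\gamma$ and $\rho$.

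\textbf{Main obstacle.} The genuinely delicate point is the \emph{critical} nature of $\lambda=1$: the heat kernel bound in Theorem \ref{cor-dirichlet} is stated for $\lambda\in[0,1]$ and at $\lambda=1$ it degenerates (the exponent $1+\sqrt{1-\lambda}$ becomes $1$), and moreover by Remark \ref{rem-parabolic} the true on-diagonal decay at $\lambda=1$ is actually faster, like $(t\log t\log\log t)^{-1}$ far from $K$. One must be careful that the Lieb--Thirring argument only requires the \emph{upper} bound at $\lambda=1$ (which Theorem \ref{cor-dirichlet} does provide, being stated for the closed interval including $1$), so no lower bound or two-sided estimate is needed; the degeneracy therefore causes no problem, but one must check that the log in the denominator, though now only to the first power, is still enough to absorb the Hardy singularity — this works precisely because for $d=2$ the ``missing'' volume growth is exactly one logarithmic factor, matching the single $\log$ we have. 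A secondary technical nuisance is justifying the Trotter product formula / the monotonicity $\tr\,e^{-s(A-V)}\le\int e^{-sA}(x,x)e^{sV_+(x)}\,dx$ in the presence of the Hardy potential $U_\infty$ already absorbed into $A=H_D(1,U_\infty)$; this is handled by approximating $U_\infty$ from below by bounded compactly supported potentials (as in the proof of Theorem \ref{cor-dirichlet}), applying the inequality for each approximant where $A_n=-\Delta_D-U_{\infty,n}$ still enjoys the same form of heat kernel upper bound with constants uniform in $n$ (again from Theorem \ref{thm-upperb} / \ref{cor-dirichlet}), and passing to the limit by monotone convergence of the traces.
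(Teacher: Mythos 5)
Your high-level strategy — pass through Lieb's semigroup inequality and feed in the on-diagonal heat kernel bound from Theorem~\ref{cor-dirichlet} at $\lambda=1$ — is exactly the paper's route. But the concrete execution contains several errors that would prevent the proof from going through as written.

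First, you have misread Theorem~\ref{cor-dirichlet} at $\lambda=1$: with $\sqrt{1-\lambda}=0$ the exponent in the denominator of \eqref{dirichlet-ub} is $1+\sqrt{1-\lambda}=1$, not $2$, and the numerator exponents are $\tfrac12$ each. The correct on-diagonal bound is therefore
\[
\e^{-tH_D(1,U_\infty)}(x,x)\ \leq\ \frac{C\,\log\tfrac{|x|}{\rho}}{t\,\log\tfrac{|x|+\sqrt t}{\rho}}\ \leq\ \frac{C}{t}\,,
\]
where the last step uses $\log\tfrac{|x|}{\rho}\le\log\tfrac{|x|+\sqrt t}{\rho}$. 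There is no leftover logarithm in the denominator. Your claim that the denominator $\log\tfrac{|x|+\sqrt s}{\rho}$ is bounded below by a positive constant uniformly in $x\in M$ is also false: it tends to $0$ as $|x|\to\rho^+$ and $s\to 0^+$. What actually saves the estimate is the cancellation $\log\tfrac{|x|}{\rho}/\log\tfrac{|x|+\sqrt t}{\rho}\le 1$, which is why the correct conclusion is simply $\e^{-tA}(x,x)\le C/t$.

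Second, your proposed reduction to the pointwise-in-$x$ estimate
\[
\int_0^\infty s^{\gamma-1}\,\frac{1}{s\,\log\tfrac{|x|+\sqrt s}{\rho}}\,e^{sV_+(x)}\,ds\ \leq\ C\,V_+(x)^{\gamma+1}
\]
cannot hold: the left-hand side is $+\infty$ whenever $V_+(x)>0$, because $e^{sV_+(x)}$ grows exponentially while the logarithm grows only like $\tfrac12\log s$. The Golden--Thompson/Trotter inequality $\tr e^{-s(A-V)}\le\int e^{-sA}(x,x)e^{sV_+(x)}dx$ that you invoke is correct as a pointwise-in-$s$ inequality, but the ensuing $s$-integral diverges; it is not the right vehicle. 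What the paper uses instead is Lieb's inequality \eqref{eq-lieb}, in which the factor $e^{tV(x)}$ is replaced by the compactly supported cutoff $(tV(x)-b)_+$ and there is an additional $t^{-1-\gamma}$. With $\e^{-tA}(x,x)\le C/t$, the $t$-integral becomes
\[
C\int_0^\infty t^{-2-\gamma}\,(tV(x)-b)_+\,dt\ =\ \frac{C}{b^\gamma\,\gamma(\gamma+1)}\,V(x)^{\gamma+1}\,,
\]
which is finite precisely because the $(tV-b)_+$ cutoff removes the small-$t$ singularity and $t^{-2-\gamma}$ kills the large-$t$ tail; integrating over $x$ gives \eqref{hlt}.

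Finally, your discussion of the ``main obstacle'' is off-target. Theorem~\ref{cor-dirichlet} already supplies the upper bound at $\lambda=1$, and — contrary to what you suggest — the logarithmic gain is \emph{not} what ``carries the Hardy term''; the paper discards it entirely and uses only $\e^{-tA}(x,x)\le C/t$. For $\gamma>0$, a $t^{-1}$ on-diagonal bound is all that is needed. The genuine content of the theorem is that the critical operator $-\Delta_D-U_\infty$ still has this $t^{-1}$ decay despite the subtraction of the sharp Hardy weight, and that fact is established in Theorem~\ref{cor-dirichlet} via the weighted-manifold analysis, not in the proof of Theorem~\ref{thm-hlt} itself. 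The extra logarithmic decay only becomes essential in the $\gamma=0$ case (Theorem~\ref{thm-hclr}), which is treated separately and — as Remark~\ref{rem-parabolic} explains — requires $\lambda<1$.
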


\begin{proof}
By the min-max principle it suffices to prove \eqref{hlt} for $V\geq 0$. The inequality of Lieb, see \cite{lieb}, yields the upper bound
\begin{equation} \label{eq-lieb}
\tr\left(-\Delta_D -U_\infty  -V\right)_-^\gamma \, \leq \,
L_{b,\gamma}\, \int_M\, \int_{0}^\infty  \, \e^{-t H_D\left(1, U_\infty\right)}(x,x)\, t^{-1-\gamma}\, (t\, V(x)-b)_+\, dt\, dx,
\end{equation}
where $b>0$ is arbitrary and 
\begin{equation} \label{Lb}
L_{b,\gamma} = \Gamma(\gamma+1)\, \left(
e^{-b}-b\, \int_b^\infty\, s^{-1}\, e^{-s}\, ds\right)^{-1}.
\end{equation}
On the other hand, Theorem \ref{cor-dirichlet} implies that 
$$
\e^{-t H_D\left(1, U_\infty\right)}(x,x) \ \leq C\ \frac{\log\frac{|x|}{\rho} }{t \, \log \frac{|x| +\sqrt{t}}{\rho}}\  \leq 
\ \frac{C}{t} \qquad \forall\ x \in M, \quad t>0,
$$
with some constant $C$ independent of $x$. Inequality \eqref{hlt} now follows by inserting the above upper bound in \eqref{eq-lieb} and integrating with respect to $t$. 
\end{proof}

\noindent In the sequel we denote by 
$$
N(-\Delta_D -\lambda\, U_\infty  -V,0) := \tr\left(-\Delta_D -U_\infty  -a V\right)_-^0 
$$
the number of negative eigenvalues, counted with their multiplicities, of the operator $-\Delta_D -\lambda\, U_\infty  -V$. 
 
\begin{remark}
Inequality \eqref{hlt}, similarly as \eqref{hlt-ef}, fails when $\gamma=0$. Indeed, if for $K=B(0,\rho)$, then a standard test function argument shows that the operator  $-\Delta_D -U_\infty -a V,$
with some  $V\geq 0, \, V\neq 0,$ has at least one negative eigenvalue for any $a>0$. Hence
$$
 N(-\Delta_D -U_\infty  -a V,0)  \, \geq \, 1 \qquad \forall\ a>0,
$$
which contradicts \eqref{hlt} for $\gamma=0$ and $a$ small enough.
\end{remark}

\noindent If $\lambda <1$, then the operator  $H_D\left(\lambda, U_\infty\right)$ is sub-critical, and it is possible to extend inequality \eqref{hlt} to the border-line case $\gamma=0$. 

\begin{theorem} \label{thm-hclr}
Assume that $R_{\rm in}(K)= \rho=1$. There exists a constant $C_0$ such that 
\begin{equation} \label{hclr}
N(-\Delta_D -\lambda\, U_\infty  -V,0) \ \leq\ \frac{C_0}{\sqrt{1-\lambda}}\,  \int_{M} V_+(x) \left(\log V_+(x)\right)^{-\sqrt{1-\lambda}}\,  (\log |x|)^{1+\sqrt{1-\lambda}}\, dx 
\end{equation}
holds true for all $0\leq \lambda<1$ and for all $V$ for which the right hand side is finite. 
\end{theorem}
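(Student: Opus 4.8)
The plan is to obtain \eqref{hclr} from the diagonal heat kernel bound of Theorem~\ref{cor-dirichlet} via the Cwikel--Lieb--Rozenblum inequality in its heat-semigroup form, i.e.\ the $\gamma=0$ instance of \eqref{eq-lieb}.

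By the min--max principle it suffices to treat $V=V_+\geq 0$. Since $\lambda<1$, Corollary~\ref{cor-hardy} gives $-\Delta_D-\lambda U_\infty\geq(1-\lambda)U_\infty>0$, so $H_D(\lambda,U_\infty)$ is strictly positive; this subcriticality is what guarantees finiteness of $N$ and makes the argument work. I would start from
\[
N(-\Delta_D-\lambda U_\infty-V,0)\ \leq\ L_{b,0}\int_M\int_0^\infty \e^{-tH_D(\lambda,U_\infty)}(x,x)\,\frac{\bigl(tV(x)-b\bigr)_+}{t}\,dt\,dx ,
\]
which is \eqref{eq-lieb} with $\gamma=0$; the constant $L_{b,0}=\bigl(\e^{-b}-b\int_b^\infty s^{-1}\e^{-s}\,ds\bigr)^{-1}$ is finite and positive once $b$ is small (the bracket tends to $1$ as $b\to 0^+$), and the inequality holds by the same argument of Lieb that yields \eqref{eq-lieb}, equivalently by the standard heat-kernel derivation of CLR applied to the ultracontractive semigroup $\e^{-tH_D(\lambda,U_\infty)}$.

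Then I would insert the bound of Theorem~\ref{cor-dirichlet} with $\rho=1$, namely $\e^{-tH_D(\lambda,U_\infty)}(x,x)\leq C(\log|x|)^{p}t^{-1}(\log(|x|+\sqrt t))^{-p}$ with $p:=1+\sqrt{1-\lambda}>1$, and compute the inner integral. The substitution $\sqrt t=|x|\e^{u}$ turns it, up to a fixed constant, into $V(x)(\log|x|)^{p}\int_{u_\ast}^\infty(\log|x|+\log(1+\e^{u}))^{-p}\,du$ with $u_\ast=\tfrac12\log\bigl(b/(|x|^2V(x))\bigr)$; using $u_+\leq\log(1+\e^u)\leq u_++\log 2$ this is comparable to $\frac{1}{p-1}V(x)(\log|x|)^{p}\bigl(\log|x|+(u_\ast)_++1\bigr)^{1-p}$, up to lower-order corrections when $u_\ast<0$. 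The prefactor $\frac1{p-1}=\frac1{\sqrt{1-\lambda}}$ arises precisely from the borderline convergence of $\int^\infty u^{-p}\,du$, which is where the hypothesis $\lambda<1$ enters quantitatively. Since $\log|x|+u_\ast=\tfrac12\log\bigl(b/V(x)\bigr)$ is independent of $|x|$, the $x$-integrand is comparable to $\frac1{\sqrt{1-\lambda}}V(x)\bigl(1+\log(b/V(x))\bigr)^{-\sqrt{1-\lambda}}(\log|x|)^{1+\sqrt{1-\lambda}}$, and integrating in $x$ (with $b$ fixed once and for all, affecting only $C_0$) gives \eqref{hclr}.

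The main obstacle is the $t$-integration: one has to keep the factor $(\log(|x|+\sqrt t))^{-p}$ intact and split according to $\sqrt t\lesssim|x|$ versus $\sqrt t\gtrsim|x|$, because replacing it prematurely by $(\log|x|)^{-p}$ or by $(\log t)^{-p}$ alone destroys either the $(\log|x|)^{1+\sqrt{1-\lambda}}$ weight or the sharp $\frac1{\sqrt{1-\lambda}}$ growth of the constant. Secondary points are to check that \eqref{eq-lieb} is legitimate at $\gamma=0$ with $L_{b,0}<\infty$ for small $b$, and to dispose of the region where $x$ is near $\partial M$ (there $\log|x|$ is small, so the weight $(\log|x|)^{p}$ makes the contribution harmless) by the same computation.
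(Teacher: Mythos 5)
Your proposal follows the same strategy as the paper: apply Lieb's CLR inequality \eqref{eq-lieb} with $\gamma=0$ to the ultracontractive semigroup, then insert the on-diagonal bound from Theorem~\ref{cor-dirichlet}. Where you part ways is in how the inner $t$-integral is evaluated, and here your ``main obstacle'' paragraph is mistaken about what goes wrong. You insist on keeping the factor $(\log(|x|+\sqrt{t}))^{-p}$ intact and changing variables $\sqrt{t}=|x|e^u$, claiming that the cruder replacement by $(\log t)^{-p}$ would destroy the sharp $\tfrac{1}{\sqrt{1-\lambda}}$ growth. The paper does exactly that replacement and nothing is lost: since $\log(|x|+\sqrt{t})\geq\tfrac12\log t$ for $t\geq1$ and $p\leq2$, one has $(\log(|x|+\sqrt{t}))^{-p}\leq 4(\log t)^{-p}$, the weight $(\log|x|)^p$ in the numerator of the heat kernel bound is untouched and factors out of the $t$-integral, and with $b=1$ the remaining integral is evaluated explicitly via $s=\log t$:
\begin{equation*}
\int_{1/V}^{\infty}\frac{tV-1}{t^2(\log t)^p}\,dt
=\frac{V}{p-1}\,|\log V|^{1-p}-\int_{|\log V|}^{\infty}\frac{e^{-s}}{s^p}\,ds
=\frac{V}{\sqrt{1-\lambda}}\,|\log V|^{-\sqrt{1-\lambda}}+\text{lower order,}
\end{equation*}
producing precisely the two factors you were worried about. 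So your substitution is a correct but longer route to the same computation, and the claim that it is forced is wrong. I would also point out that the regime you wave away as ``lower-order corrections when $u_\ast<0$'' is the same regime (equivalently $t<1$, relevant when $V(x)>b$) in which the paper's bound $(\log t)^{-p}$ is likewise not usable; this is a small shared blemish in both arguments, not something your extra substitution fixes.
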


\begin{proof}
As in the proof of Theorem \ref{thm-hlt} we assume without loss of generality that $V\geq 0$.  
Since $|x| \geq 1$, Theorem \ref{cor-dirichlet} implies that the upper bound 
$$
\e^{-t H_D\left(\lambda, U_\infty\right)}(x,x) \ \leq \frac{C\, \left(\log |x| \right)^{1+\sqrt{1-\lambda}}}{t \left(\log (|x| +\sqrt{t}) \right)^{1+\sqrt{1-\lambda}}}\  \leq 
\ \frac{4\, C\, \left(\log |x| \right)^{1+\sqrt{1-\lambda}} }{t\, \left(\log t \right)^{1+\sqrt{1-\lambda}}} 
$$
holds for all $x\in M$ and $t>0$. Hence in view of \eqref{eq-lieb} we have 
\begin{align*}
N(-\Delta_D -\lambda\, U_\infty  -V,0) &\  \leq\  L_{b,0}  \int_M\, \int_{0}^\infty  \, \e^{-t H_D\left(\lambda, U_\infty\right)}(x,x)\, t^{-1}\, (t\, V(x)-b)_+\, dt\, dx \\
& \ \leq \   C_0 \int_M \left(\log |x| \right)^{1+\sqrt{1-\lambda}} \, \int_{0}^\infty \frac{(t\, V(x)-b)_+}{t^2\, (\log t)^{1+\sqrt{1-\lambda}}}\ dt\, dx,
\end{align*}
where $C_0 = 4\, C \,  L_{b,0}$. 
The claim follows by  and choosing $b=1$ and integrating with respect to $t$.
\end{proof}

\begin{remark}
As expected the constant on the right hand side of \eqref{hclr} diverges as $\lambda$ approaches the critical value $1$. 
The presence of a logarithmic weight in the estimates for the number of negative eigenvalues of Schr\"odinger operators is typical for the two-dimensional case, see e.g.~\cite{ls,sol1,st}. 
\end{remark}

\section{\bf The case $d=1$}
\label{sec-1d}
\noindent 
The effect of boundary conditions on the large behavior of the heat kernel in dimension one is even more robust than the case $d=2$. In order to see why let us consider the half-line $M=\R_+$ and the Laplacian with Robin boundary condition at zero associated with the symmetric sesquilinear form
\begin{equation}
\Q_\sigma [u,v] = \int_0^\infty \overline{u'} v'\, dx +\sigma\, \overline{u(0)}\, v(0) , \qquad u,v \in H^1(\R_+), \quad \sigma>0\, .
\end{equation}
For definiteness we will assume that the potential $U$ is given by
$$
U(x) := \U_\sigma(x) : =\frac14\, \left(x+\frac{1}{\sigma}\right)^{-2}\, .
$$
By mimicking the arguments of section \ref{sec-prelim} it is easy to verify that the form 
\begin{equation} \label{from-1d}
\Q_\sigma [u,v] -\lambda \int_0^\infty U\, \overline{u}\, v\, dx, \qquad u,v\in H^1(\R_+)\, .
\end{equation}
is closed for all $\lambda\in\R$ and that the associated operator, which we denote by $\HH_\sigma(\lambda,U)$, generates in $L^2(\R_+)$
an ultracontractive semigroup with integral kernel 
$$
\e^{-t\, \HH_\sigma(\lambda, \, \U_\sigma )}(x,y), \qquad x,y\in M, \quad t>0. 
$$
We have 

\begin{theorem} \label{thm-1d}
Let $0\leq \lambda \leq 1$. Then there exit positive constants $C,c>0$ such that for all $x,y\in\R_+$ and all $t>0$ 
\begin{align*} 
\frac{C \, F_1(x,y;\lambda)\  \e^{-\frac{c\, |x-y|^2}{t}}}{\sqrt{t}\, \left( x+\sqrt{t}+\frac{1}{\sigma} \right)^{1+\sqrt{1-\lambda}}}  & \ \leq \ \e^{-t\, \HH_\sigma(\lambda, \, \U_\sigma )}(x,y) 
\ \leq\   \frac{ F_1(x,y;\lambda)\  \e^{-\frac{ |x-y|^2}{c t}}}{C\sqrt{t}\, \left( x+\sqrt{t}+\frac{1}{\sigma} \right)^{1+\sqrt{1-\lambda}}}  \, ,
\end{align*}
where 
$$
F_1(x,y;\lambda) = \left(\left( x+\frac{1}{\sigma} \right)\left( y+\frac{1}{\sigma} \right)\right)^{\frac{1+\sqrt{1-\lambda}}{2}}\, .
$$
\end{theorem}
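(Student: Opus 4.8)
The plan is to follow the strategy of Section~\ref{sec-d2}: change the weight so as to reduce $\HH_\sigma(\lambda,\U_\sigma)$ to a Laplacian on a suitably weighted half-line, and then invoke the Li--Yau type estimates of Grigor'yan--Saloff-Coste. Put $\alpha=\frac{1+\sqrt{1-\lambda}}{2}$ and $w(x)=\bigl(x+\tfrac1\sigma\bigr)^{\alpha}$, and write $u=wf$ for $u\in H^1(\R_+)$, so that $f\in H^1(\R_+,w^2\,dx)$. Expanding $\overline{u'}v'=(w')^2\overline fg+w'w(\overline fg)'+w^2\overline{f'}g'$ and integrating the middle term by parts — the one-dimensional analogue of~\eqref{radial-der}, with $r\,dr$ replaced by $dx$ and $\log\frac r\rho$ by $x$ — the coefficient of $\int_0^\infty(x+\tfrac1\sigma)^{2\alpha-2}\overline fg\,dx$ equals $\alpha(1-\alpha)=\tfrac\lambda4$, which exactly cancels $\lambda\int_0^\infty\U_\sigma\,\overline fg\,w^2\,dx=\tfrac\lambda4\int_0^\infty(x+\tfrac1\sigma)^{2\alpha-2}\overline fg\,dx$, while the boundary contribution $-\alpha(\tfrac1\sigma)^{2\alpha-1}\overline{f(0)}g(0)$ produced by the integration by parts combines with the Robin term $\sigma w(0)^2\overline{f(0)}g(0)$ to leave
\[
\Q_\sigma[wf,wg]-\lambda\int_0^\infty\U_\sigma\,\overline{f}g\,w^2\,dx \;=\; \int_0^\infty\overline{f'}g'\,w^2\,dx+\tau\,\overline{f(0)}g(0),\qquad \tau:=(1-\alpha)\Bigl(\tfrac1\sigma\Bigr)^{2\alpha-1}\ \ge\ 0 .
\]
As $f\mapsto wf$ is unitary from $L^2(\R_+,w^2\,dx)$ onto $L^2(\R_+)$, this identifies $\HH_\sigma(\lambda,\U_\sigma)$ with the Robin Laplacian $\widehat A_\tau$ on $(\R_+,w^2\,dx)$ carrying the form on the right-hand side, with $\e^{-t\HH_\sigma(\lambda,\U_\sigma)}(x,y)=w(x)w(y)\,\e^{-t\widehat A_\tau}(x,y)$ and $F_1(x,y;\lambda)=w(x)w(y)$.

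The one-dimensional counterparts of Lemmas~\ref{lem-harnack}, \ref{lem-vd} and~\ref{lem-aux} are routine: $(\R_+,w^2\,dx)$ trivially satisfies the relatively-connected-annuli condition, $w$ grows polynomially so that $\sup_{B(x_0,2r)}w\le C\inf_{\R_+\setminus B(x_0,r)}w$ for large $r$, and $\V_1(x,\sqrt t):=\int_{(x-\sqrt t,\,x+\sqrt t)\cap\R_+}w^2\,dx\asymp\sqrt t\,\bigl(x+\sqrt t+\tfrac1\sigma\bigr)^{2\alpha}$. Hence $(\R_+,w^2\,dx)$ satisfies the parabolic Harnack inequality and, by \cite[Thm.~2.8]{gs2}, the Neumann Laplacian $\widehat A_0$ obeys $\e^{-t\widehat A_0}(x,y)\asymp\V_1(x,\sqrt t)^{-1}\e^{-c|x-y|^2/t}$. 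Since $\tau\ge0$, domination of semigroups gives $\e^{-t\widehat A_\tau}(x,y)\le\e^{-t\widehat A_0}(x,y)$; combined with the first paragraph this already yields the claimed \emph{upper} bound for every $\lambda\in[0,1]$.

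For the lower bound I would argue as in the proof of Proposition~\ref{prop-lowerb}. If $\lambda<1$ then $2\alpha>1$, so $\int^\infty\V_1(x,\sqrt t)^{-1}\,dt<\infty$ and $(\R_+,w^2\,dx)$ is non-parabolic; comparing the Dirichlet heat kernel on $(\R_+\setminus\{0\},w^2\,dx)$ with the global one via \cite[Thm.~3.1]{gs2} gives $\e^{-t\widehat A_\tau}(x,y)\ge\e^{-t\widehat A_D}(x,y)\ge c_1\V_1(x,\sqrt t)^{-1}\e^{-c|x-y|^2/t}$ for $x,y$ at distance $\ge\eps$ from $0$. To reach all $x,y\in\R_+$, as the statement demands, one supplements this with a near-boundary estimate: on a fixed neighbourhood of $0$ one has $w^2\asymp1$, so local parabolic Harnack for $\widehat A_\tau$ gives Gaussian lower bounds there for bounded $t$, and for large $t$ with $x$ (say) close to $0$ one uses the reproducing identity $\e^{-t\widehat A_\tau}(x,y)=\int_0^\infty\e^{-\frac t2\widehat A_\tau}(x,z)\e^{-\frac t2\widehat A_\tau}(z,y)w^2(z)\,dz$, restricting the $z$-integral to an annulus at distance $\asymp\sqrt t$ from $0$ and applying the bound just obtained. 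Feeding the resulting two-sided estimate for $\widehat A_\tau$ back through the first paragraph proves the theorem for $0\le\lambda<1$.

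The main obstacle is the critical value $\lambda=1$. Then $\alpha=\tfrac12$, $\tau=\tfrac12>0$, and the weighted half-line — whose volume grows like $r^2$ at infinity while $w^2\asymp1$ near $0$, so that $\{0\}$ is a genuine, positive-capacity boundary point — becomes \emph{parabolic}: the Dirichlet-comparison step of \cite[Thm.~3.1]{gs2} no longer applies, and, exactly as in Remark~\ref{rem-parabolic}, a Robin condition with $\tau>0$ on a recurrent base is expected to speed up the decay. I would treat $\lambda=1$ by hand, exploiting that after the shift $\xi=x+\tfrac1\sigma$ the operator $\widehat A_\tau$ is the radial Laplacian of dimension two on $\{\xi>\tfrac1\sigma\}$ with a Robin condition at $\xi=\tfrac1\sigma$, so that the long-time behaviour of its heat kernel can be extracted from the explicit Bessel-function representation of the free radial kernel (and is governed by two-dimensional exterior estimates of the type~\eqref{gsc-eq}); alternatively, monotone convergence of the forms as $\lambda\uparrow1$, together with the continuity of $t\mapsto\e^{-t\HH_\sigma(\lambda,\U_\sigma)}(x,y)$, at least transfers the upper estimate to $\lambda=1$.
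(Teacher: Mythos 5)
Your choice of weight $w(x) = (x+\tfrac1\sigma)^\alpha$ leaves a residual Robin term $\tau\,\overline{f(0)}g(0)$ with $\tau = (1-\alpha)(\tfrac1\sigma)^{2\alpha-1} \ge 0$, and this is precisely what causes all your subsequent difficulties: it forces you to obtain the lower bound by Dirichlet comparison (which only covers $x,y$ away from $0$ and then requires a supplementary near-boundary argument), and it leaves the endpoint $\lambda = 1$ --- where $(\R_+,w^2\,dx)$ is parabolic and $\tau = \tfrac12 > 0$ --- genuinely open, as you acknowledge. The paper uses a different shift in the weight, $\omega(x) = (x+\tfrac\alpha\sigma)^\alpha$, chosen so that $\omega'(0) = \sigma\,\omega(0)$: the boundary contribution produced by the integration by parts then cancels the Robin term \emph{identically}, and one is left with the pure weighted form $\int_0^\infty \omega^2\,\overline{f'}g'\,dx$ on $H^1(\R_+,\omega^2\,dx)$ --- no Robin term, no Dirichlet comparison, no near-boundary estimate. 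The two-sided Gaussian bound of \cite[Thm.~2.8]{gs2}, together with the elementary estimate $\V_1(x,\sqrt t) \asymp \sqrt t\,\bigl(x+\sqrt t+\tfrac\alpha\sigma\bigr)^{2\alpha}$, then yields the theorem directly for \emph{all} $\lambda\in[0,1]$ and \emph{all} $x,y\in\R_+$. (For the bulk terms to cancel exactly as well one must take $\U_\sigma(x) = \tfrac14(x+\tfrac\alpha\sigma)^{-2}$; this is comparable --- uniformly for $\alpha\in[\tfrac12,1]$ --- to the $\tfrac14(x+\tfrac1\sigma)^{-2}$ written in Section~\ref{sec-1d}, so the constants of the theorem absorb the difference.)

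The concrete gap in your proposal is therefore the lower bound at $\lambda = 1$: a Robin condition with a strictly positive coefficient at a boundary point of positive capacity on a parabolic weighted half-line produces a heat kernel decaying \emph{strictly faster} than $\V_1(x,\sqrt t)^{-1}$, exactly as Remark~\ref{rem-parabolic} records in $d=2$. So your Dirichlet-comparison route cannot be patched at $\lambda=1$, and your suggested Bessel-function or monotone-limit fallbacks would at best recover the upper bound, not the matching lower one. The remedy is not a harder lower-bound argument but the better choice of shift $\tfrac\alpha\sigma$ that makes the boundary term disappear altogether.
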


\smallskip

\begin{proof}
We will proceed in a similar way as in the proof of Theorem \ref{thm-upperb}.  Here we choose the weight function in the form
\begin{equation} \label{omega}
\omega(x) := \left(x+\frac \alpha\sigma\right)^\alpha, 
\end{equation}
with $\alpha$ as in \eqref{a-b}, and substitute $u=\omega\, f,\ v=\omega\, g$ with $f,g \in H^1(\R_+, \omega^2 dx)$. A direct calculation then yields
\begin{align*}
\Q_\sigma [u,v] -\lambda \int_0^\infty \U_\sigma\, \overline{u}\, v\, dx & = \int_0^\infty \overline{f'}\, g'\, \omega^2\, dx.
\end{align*}
Hence 
\begin{equation} \label{trans-1d}
\e^{-t \, \HH_\sigma(\lambda, \, \U_\sigma )}(x,y) =\omega(x)\, \omega(y)\ \e^{-t\, \A_\sigma }(x,y), \qquad x,y\in\R_+,
\end{equation}
where $\A_\sigma$ is the self-adjoint operator in $L^2(\R_+,\, \omega^2 dx)$ associated with the quadratic form 
$$
 \int_0^\infty |f'|^2\, \omega^2\, dx
$$
with the form domain $H^1(\R_+, \omega^2 dx)$. Since
\begin{equation}
\sup_{0\leq x\leq 2r}\, \omega(x) \ \leq \ C' \inf_{r \leq x}\, \omega(x), \qquad \forall\ r >0,
\end{equation} 
with $C'$ independent of $r$, it follows from \cite[Thm.~2.11]{gs2} that the weighted manifold $(\R_+, \omega^2 dx)$ satisfies the parabolic Harnack inequality. 
By \cite[Thm.~2.8]{gs2} we thus infer that there exist positive constants $C$ and $c$ such that
$$
\frac{C\, \e^{-\frac{c\, |x-y|^2}{t}}}{\V_1(x,\sqrt{t}) }\ \leq \ \e^{-t\, \A_\sigma }(x,y)\ \leq \ \frac{\e^{-\frac{ |x-y|^2}{ct}}}{C\, \V_1(x,\sqrt{t}) }\, ,
$$
holds for all $x,y\in \R_+$ and $t>0$, where 
$$
\V_1(x,\sqrt{t}) = \int_{x-\sqrt{t}}^{x+\sqrt{t}} \omega^2(y)\, dy.
$$
Since 
$$
\frac{\sqrt{t}}{2^{2\alpha+1}}\, \left( x+\sqrt{t}+\frac{\alpha}{\sigma} \right)^{2\alpha}\, \leq \,  \V_1(x,\sqrt{t}) \, \leq\, 2\, \sqrt{t}\, \left( x+\sqrt{t}+\frac{\alpha}{\sigma} \right)^{2\alpha}, 
$$
the claim follows from \eqref{trans-1d}. 
\end{proof}

\section{\bf The case $d \geq 3$}
\label{sec-hd}
\noindent Contrary to the cases $d=1$ and $d=2$, in higher dimensions the presence of Robin boundary conditions on $\partial M$ does not accelerate the decay of the associated heat kernel, at least as long as no potential is introduced. Indeed, using the domination of semigroups and \cite[Thm.~3.1]{gs2} we obtain 

\begin{proposition}  \label{prop-hd}
Let $d\geq 3$ and let $K\subset\R^d$ be an open bounded and simply connected set with Lipschitz boundary. Let $M=\R^d\setminus K$. Assume that $\sigma \in L^\infty(\partial M)$. Then there exist positive constants $\eps$, $c$ and $C$ such that
\begin{equation}  \label{lowerb-hd}
C^{-1}\,  t^{-\frac d2}\ \e^{-\frac{c |x-y|^2}{t}} \ \leq \  \e^{t \Delta_\sigma}(x,y) \ \leq \ C\,  t^{-\frac d2}\ \e^{-\frac{|x-y|^2}{ct}}
\end{equation}
holds for all $t>0$ and all $x,y\in M\setminus K_\eps$.  
\end{proposition}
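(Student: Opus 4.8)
The plan is to follow the same scheme as in the two-dimensional case but \emph{without} introducing any weight, since in dimension $d\ge 3$ the reference potential $U_\sigma$ plays no role and the relevant comparison is with the Neumann (and Dirichlet) Laplacian directly. First I would establish the upper bound in~\eqref{lowerb-hd}. By the Beurling--Deny criteria and the domination of semigroups (Lemma~\ref{lem-domain} and the argument of Corollary~\ref{cor-1} adapt verbatim with $\lambda=0$, $U=0$ but $\sigma$ not necessarily nonnegative --- here one splits $\sigma=\sigma_+-\sigma_-$ and absorbs $-\sigma_-$ as a bounded boundary perturbation, so the form is still closed and lower semibounded), one gets that $\e^{t\Delta_\sigma}$ is dominated by $\e^{t\Delta_N}=\e^{t\Delta_0}$ up to an exponential factor $\e^{\omega t}$; combined with the standard Gaussian upper bound~\eqref{neumann-ub} for the Neumann Laplacian on the exterior Lipschitz domain $M$, this yields $\e^{t\Delta_\sigma}(x,y)\le C\,t^{-d/2}\e^{-|x-y|^2/(ct)}\e^{\omega t}$ for all $t>0$; for $t$ bounded away from $\infty$ this is already the claimed bound, and the point is that for large $t$ the correct (non-exponentially-growing) bound is recovered because $-\Delta_\sigma\ge 0$ when $\sigma\ge 0$, and more generally the bottom of the spectrum is $0$ on an exterior domain in $d\ge3$. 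Actually the cleanest route for the upper bound is to note $\e^{t\Delta_\sigma}(x,y)\le \e^{t\Delta_0}(x,y)$ directly when $\sigma\ge 0$ (domination with no exponential factor), and then quote~\eqref{neumann-ub}.

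For the lower bound, which is the substantive part, I would argue as in the proof of Proposition~\ref{prop-lowerb}. By domination of semigroups we have
\[
\e^{t\Delta_\sigma}(x,y)\ \ge\ \e^{t\Delta_D}(x,y)\qquad\forall\,x,y\in\Omega:=M\setminus\partial K,\ t>0,
\]
so it suffices to bound the Dirichlet heat kernel on the exterior domain $M$ from below. The manifold $(M,dx)$ (Euclidean, unweighted) satisfies the parabolic Harnack inequality: it has the volume doubling property and the Poincar\'e inequality on balls because it is an exterior domain with Lipschitz boundary, and a pointed such manifold satisfies the condition of relatively connected annuli; hence by \cite[Thm.~2.11]{gs2} (or simply because $M$ is roughly isometric to $\R^d$ and $\R^d$ has the Harnack property, which is stable under rough isometry) the parabolic Harnack inequality holds on $(M,dx)$. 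Moreover $\int^\infty \V(x,\sqrt t)^{-1}\,dt=\int^\infty t^{-d/2}\,dt<\infty$ since $d\ge 3$, so $(M,dx)$ is non-parabolic. Since $\partial K$ is compact, \cite[Thm.~3.1]{gs2} applies with $(M,\mu)=(M,dx)$ and $\Omega$ as above: there exist $\eps>0$ and $c',C'>0$ such that
\[
\e^{t\Delta_D}(x,y)\ \ge\ C'\,\e^{-t\Delta}(x,y)\big|_{c't}\qquad\text{i.e.}\qquad \e^{t\Delta_D}(x,y)\ \ge\ C'\,\e^{-c' t\,(-\Delta_M)}(x,y)
\]
for all $t>0$ and $x,y\in M\setminus K_\eps$, where $-\Delta_M$ here denotes the (Neumann, equivalently the intrinsic) Laplacian on $(M,dx)$. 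Finally the two-sided heat kernel estimate on $(M,dx)$ furnished by the parabolic Harnack inequality via \cite[Thm.~2.8]{gs2}, together with $\V(x,\sqrt t)\asymp t^{d/2}$ for an exterior domain (the boundary being compact, for large $t$ the volume of $B(x,\sqrt t)\cap M$ is comparable to that of a Euclidean ball, and for small $t$ the Lipschitz cone condition gives the same comparability up to constants), gives
\[
\e^{-c' t(-\Delta_M)}(x,y)\ \ge\ \frac{C''\,\e^{-c''|x-y|^2/t}}{(c't)^{d/2}}\ \ge\ \frac{C'''\,\e^{-c''|x-y|^2/t}}{t^{d/2}},
\]
and chaining the three displays yields the lower bound in~\eqref{lowerb-hd}.

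I would present the proof in the order: (i) reduce to the Dirichlet lower bound by domination; (ii) verify that $(M,dx)$ satisfies the parabolic Harnack inequality and is non-parabolic for $d\ge3$, with $\V(x,\sqrt t)\asymp t^{d/2}$; (iii) invoke \cite[Thm.~3.1]{gs2} to pass from $\e^{t\Delta_D}$ on $\Omega$ to a multiple of $\e^{c't\Delta_M}$ away from the compact set $K_\eps$; (iv) insert the unweighted two-sided bound $\e^{t\Delta_M}(x,y)\asymp t^{-d/2}\e^{-|x-y|^2/(ct)}$; (v) the upper bound then follows trivially from $\e^{t\Delta_\sigma}\le\e^{t\Delta_0}$ for $\sigma\ge0$ (or a bounded perturbation argument if $\sigma$ is merely in $L^\infty(\partial M)$, using that the exterior domain has spectral bottom $0$) together with~\eqref{neumann-ub}. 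The main obstacle is the careful verification that the \emph{unweighted} exterior Lipschitz domain satisfies all the hypotheses of \cite[Thm.~2.11]{gs2} --- volume doubling, the scale-invariant Poincar\'e inequality on balls (near the Lipschitz boundary), and the relatively-connected-annuli condition --- uniformly in the scale, which is where the Lipschitz regularity of $\partial K$ is genuinely used; once the parabolic Harnack inequality and non-parabolicity are in hand, the application of \cite[Thm.~3.1, Thm.~2.8]{gs2} is routine, exactly parallel to the proof of Proposition~\ref{prop-lowerb}.
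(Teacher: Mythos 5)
Your proposal follows the same overall scheme as the paper's proof: reduce by domination of semigroups to a lower bound on the Dirichlet heat kernel and an upper bound on the Neumann one, then invoke \cite[Thm.~3.1]{gs2} for the Dirichlet lower bound. But you create an unnecessary obstacle for yourself in your steps (ii)--(iii). You apply \cite[Thm.~3.1]{gs2} with the exterior domain $(M,dx)$ itself (carrying the Neumann Laplacian) as the ambient manifold, and this forces you to verify the parabolic Harnack inequality, volume doubling, and the relatively-connected-annuli condition for the Neumann Laplacian on a Lipschitz exterior domain --- which is exactly the ``main obstacle'' you flag at the end. The paper avoids this entirely by taking the full Euclidean space $\R^d$ as the ambient manifold: the Dirichlet Laplacian on $M=\R^d\setminus K$ is the same operator as the Dirichlet Laplacian on the open subset $\Omega=\R^d\setminus\overline{K}\subset\R^d$, and \cite[Thm.~3.1]{gs2} is stated precisely for such an $\Omega$ with compact complement inside a larger manifold. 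Since $\R^d$ is trivially non-parabolic for $d\ge 3$ and trivially satisfies the parabolic Harnack inequality, the theorem applies at once and gives $\e^{t\Delta_D}(x,y)\ge C^{-1}t^{-d/2}\e^{-c|x-y|^2/t}$ for $x,y$ away from a compact neighbourhood of $\partial K$, with no analysis of the exterior domain required. Your route is not wrong in principle, but it needs the harder verification; the paper's choice of ambient makes the lower bound essentially immediate.

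A separate remark on your upper-bound paragraph. The argument you sketch for $\sigma\in L^\infty(\partial M)$ of arbitrary sign --- absorbing $-\sigma_-$ as a bounded perturbation and then claiming the exponential factor $\e^{\omega t}$ can be dropped because ``the bottom of the spectrum is $0$ on an exterior domain in $d\ge 3$'' --- does not hold. If $\sigma$ is sufficiently negative on a portion of $\partial M$ (say $K$ a ball and $\sigma$ a large negative constant), the Robin Laplacian on the exterior domain has negative eigenvalues even for $d\ge 3$, so $\e^{t\Delta_\sigma}(x,x)$ grows exponentially in $t$ and the claimed Gaussian upper bound fails. The ``cleanest route'' you mention at the end of that paragraph, namely $\e^{t\Delta_\sigma}\le\e^{t\Delta_0}$ for $\sigma\ge 0$ followed by \eqref{neumann-ub}, is the one the paper actually takes, and $\sigma\ge 0$ is the hypothesis that should be understood.
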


\begin{proof} 
By domination of semigroups we have 
\begin{equation} \label{dn-domination} 
\e^{t \Delta_D}(x,y) \ \leq \  \e^{t \Delta_\sigma}(x,y) \ \leq \ \e^{t \Delta_0}(x,y), \quad x,y\in M, \ t>0.
\end{equation}
Since $\R^d, \, d\geq 3,$ equipped with the Euclidean metric is a non-parabolic manifold which satisfies the parabolic Harnack inequality, we can apply \cite[Thm.~3.1]{gs2}. The latter implies that  
$$
\e^{t \Delta_D}(x,y)\, \geq C^{-1}\,  t^{-\frac d2}\ \e^{-\frac{c |x-y|^2}{t}}\, .
$$
This proves the lower bound in \eqref{lowerb-hd}. The upper bound follows from \eqref{neumann-ub} and \eqref{dn-domination}.  
\end{proof}

\medskip


\appendix
\section{}\label{sec:app} 

\begin{lemma} \label{lem-harnack}
In addition to Assumption~\ref{ass-U}, let $K\subset\R^2$ have a $C^{2}$-regular boundary. Let $w$ be given by \eqref{eq-w} with $\alpha\in [1/2, 1]$ and $\beta>0$. Then there exists a constant $c$, independent of $\alpha$ and $\beta$,  such that 
\begin{equation}
\sup_{x\in B(x_0, 2r)}\, w(x) \ \leq \ c \inf_{x \in M\setminus B(x_0, r)}\, w(x) 
\end{equation} 
holds for any  $x_0\in M$ and all $r$ large enough. 
\end{lemma}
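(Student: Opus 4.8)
The plan is to exploit that $w$ is radial and monotone. Writing $w(x)=g(|x|)$ with $g(s):=\big(\log(s/\rho)+\beta\big)^\alpha$, observe that by \eqref{inner-ball} one has $B(0,\rho)\subseteq K$, hence $|x|\ge\rho$ for every $x\in M$; thus $\log(|x|/\rho)+\beta\ge\beta>0$ on $M$, so $g$ is well defined, positive and strictly increasing on $[\rho,\infty)$. Consequently the two quantities in the statement are controlled entirely by the range of $|x|$ over $B(x_0,2r)\cap M$ and over $M\setminus B(x_0,r)$, and the problem reduces to comparing two values of the scalar function $g$.

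First I would fix $x_0\in M$ and restrict attention to radii $r\ge r_0(x_0)$, where $r_0(x_0)$ is chosen large enough (depending only on $x_0$, $K$ and $\rho$) that $K\subseteq B(x_0,r)$, that $r\ge 2|x_0|$, and that $r\ge 12\rho$. For such $r$ one has $M\setminus B(x_0,r)=\{x\in\R^2:|x-x_0|\ge r\}$, and the triangle inequality gives $|x|\le|x_0|+2r\le 3r$ for $x\in B(x_0,2r)$ and $|x|\ge r-|x_0|\ge r/2$ whenever $|x-x_0|\ge r$. Since $g$ is increasing, this yields $\sup_{B(x_0,2r)\cap M}w\le g(3r)$ and $\inf_{M\setminus B(x_0,r)}w\ge g(r/2)$.

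It then remains to estimate the ratio $g(3r)/g(r/2)$. Using the elementary identity $\log(3r/\rho)=\log\big(r/(2\rho)\big)+\log 6$, this ratio equals $\big(1+\log 6/(\log(r/(2\rho))+\beta)\big)^\alpha$; for $r\ge 12\rho$ the denominator exceeds $\log 6$, so the base is smaller than $2$, and since $\alpha\le 1$ the whole ratio is smaller than $2$. Hence the assertion holds with $c=2$, which is manifestly independent of $\alpha\in[1/2,1]$ and of $\beta>0$ (and in fact also of $x_0$; only the threshold $r_0$ depends on $x_0$).

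The argument is entirely elementary and I foresee no genuine obstacle: the one point needing a little care is the geometric remark that for large $r$ the set $M\setminus B(x_0,r)$ is exactly the complement of a ball, and that the relevant annular region is pinched between $|x|=r/2$ and $|x|=3r$; after that, the additive behaviour $\log(Cs)=\log s+\log C$ of the logarithm together with $\alpha\le 1$ forces the ratio to $1$ as $r\to\infty$. (If one prefers to read the supremum over all of $B(x_0,2r)$ rather than over $B(x_0,2r)\cap M$, note that points with $|x|<\rho$ contribute at most $\beta^\alpha\le g(3r)$, so nothing changes.)
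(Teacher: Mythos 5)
Your proof is correct and takes essentially the same approach as the paper: both exploit that $w$ is radial and increasing, reduce to the elementary geometric pinching $r/2\le|x|\le 3r$ once $r$ is large relative to $|x_0|$ and $K$, and then use the slow (logarithmic) variation of the weight to bound the ratio. The only cosmetic difference is that you read off the explicit constant $c=2$ directly via $\log(3r/\rho)=\log\bigl(r/(2\rho)\bigr)+\log 6$ and $\alpha\le 1$, whereas the paper encodes the same estimate through an auxiliary parameter $\gamma$ with $\overline K\subset B(x_0,\gamma|x_0|)$ and the condition $\gamma^{c-1}(1-\gamma^{-1})^c\ge 2+\gamma^{-1}$.
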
 

\begin{proof}
Take $\gamma >2$ large enough so that $\overline{K} \subset B(x_0, \gamma\, |x_0|)$. Then for any $r\geq \gamma\, |x_0|$ we have 
\begin{equation} \label{sup}
\sup_{x\in B(x_0, 2r)}\, w(x) \ \leq \ \left(\log\frac{|x_0|+2r}{\rho}+\beta\right)^{\alpha} \, \leq\, \left(\log\frac{r\, (2+\gamma^{-1})}{\rho}+\beta\right)^{\alpha}\, .
\end{equation} 
On the other hand, since $r\geq\, \gamma\, |x_0| \geq \gamma\, \rho$ for $c>1$ it holds
\begin{align*}
c \inf_{x \in M\setminus B(x_0, r)}\, w(x)  & \geq \left(c \log\frac{r-|x_0|}{\rho}+\beta\right)^{\alpha}\, \geq \left(c \log\frac{r\, (1-\gamma^{-1})}{\rho}+\beta\right)^{\alpha} \\
& \geq \left( \log\frac r\rho +\log\, ( \gamma^{c-1}\, (1-\gamma^{-1})^c) +\beta\right)^{\alpha}\, ,
\end{align*} 
where we have used the fact that $1/2 \leq \alpha \leq 1$, see \eqref{a-b}. In view of \eqref{sup} it thus suffices to take $c$ large enough such that 
$$
\gamma^{c-1}\, (1-\gamma^{-1})^c \ \geq\ 2+\gamma^{-1}\, .
$$
\end{proof}

\begin{lemma} \label{lem-aux}
Let $w$ be given by \eqref{eq-w} with $\alpha, \beta>0$. Then there exists a constant $c_0>0$, independent of $\beta$, such that 
\begin{equation} \label{V-lowerb}
c_0\ t \, \left(\log\left(\frac{|x|+\sqrt{t}}{\rho}\right)+\beta\right)^{2\alpha} \ \leq \ 
\V_2(x,\sqrt{t} )\ \leq \ \pi\, t \, \left(\log\left(\frac{|x|+\sqrt{t}}{\rho}\right)+\beta\right)^{2\alpha} 
\end{equation}
holds for all $x\in M$ and $t>0$, where $\V_2(x,\sqrt{t} )$ is defined in \eqref{volume}. 
\end{lemma}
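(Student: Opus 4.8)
The plan is to estimate the weighted volume $\V_2(x,\sqrt t)=\int_{B(x,\sqrt t)\cap M} w^2(y)\,dy$ directly from the explicit form \eqref{eq-w} of $w$, $w^2(y)=\bigl(\log\frac{|y|}{\rho}+\beta\bigr)^{2\alpha}$, by controlling the logarithmic integrand on the ball $B(x,\sqrt t)$. The key elementary observation is that for $y\in B(x,\sqrt t)$ one has $|y|\le |x|+\sqrt t$, so that $w^2(y)\le \bigl(\log\frac{|x|+\sqrt t}{\rho}+\beta\bigr)^{2\alpha}$; integrating this bound over $B(x,\sqrt t)\cap M\subseteq B(x,\sqrt t)$ and using $|B(x,\sqrt t)|=\pi t$ gives at once the upper bound in \eqref{V-lowerb}. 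The constant $\pi$ is independent of $\beta$, as required.

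For the lower bound I would first reduce to integrating over a region where $|y|$ is comparable to $|x|+\sqrt t$ and where a definite fraction of the Euclidean ball survives after intersecting with $M$. Concretely, I would split into two regimes. If $|x|\ge 2\rho$ (say), then on the half-ball of $B(x,\sqrt t)$ pointing away from the origin — or more simply on $B(x,\sqrt t)\cap\{|y|\ge |x|\}$, which has Euclidean measure at least $\frac12\pi t$ — we have $|y|\ge |x|\ge \tfrac12(|x|+\sqrt t)$ whenever $\sqrt t\le |x|$, while if $\sqrt t\ge |x|$ we instead note $|x|+\sqrt t\le 2\sqrt t$ and use points $y$ with $|y|\ge \sqrt t\ge \tfrac12(|x|+\sqrt t)$, of which again a fixed proportion of the ball consists. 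In either case $\log\frac{|y|}{\rho}+\beta\ge \log\frac{|x|+\sqrt t}{2\rho}+\beta\ge c'\bigl(\log\frac{|x|+\sqrt t}{\rho}+\beta\bigr)$ for a universal $c'>0$ (here one uses that $\log\frac{|x|+\sqrt t}{\rho}\ge 0$, since $|x|+\sqrt t\ge |x|\ge\rho$ by \eqref{inner-ball}, so subtracting $\log 2$ costs at most a bounded multiplicative factor; note $\beta>0$ only helps). Raising to the power $2\alpha$ and integrating over that portion of the ball yields $\V_2(x,\sqrt t)\ge c_0\, t\,\bigl(\log\frac{|x|+\sqrt t}{\rho}+\beta\bigr)^{2\alpha}$ with $c_0$ depending only on $\alpha$ and the fixed geometric fraction, not on $\beta$.

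The remaining regime $|x|<2\rho$ (points near $K$) needs a separate argument because there $B(x,\sqrt t)$ may be largely eaten by $K$; here one uses Assumption~\ref{ass-M} (Lipschitz boundary, or $C^2$ in the main theorems) to guarantee that $M$ satisfies a uniform interior cone / measure-density condition, so that $|B(x,\sqrt t)\cap M|\ge \kappa\, t$ for some $\kappa>0$ uniformly in $x\in M$ and $t>0$, and on that set $|y|$ ranges up to comparably $|x|+\sqrt t$; combined with $w\ge$ its infimum, which is $\beta^{\alpha}$ but more usefully bounded below using again $|y|$ comparable to $|x|+\sqrt t$ on a fixed fraction of the admissible set, one recovers the same lower bound. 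I expect the main obstacle to be precisely this near-boundary bookkeeping: making the ``fixed fraction of the ball lies in $M$ and simultaneously has $|y|\gtrsim |x|+\sqrt t$'' claim quantitative and uniform, while keeping the constant $c_0$ independent of $\beta$. Everything else is the elementary monotonicity of $r\mapsto (\log\frac r\rho+\beta)^{2\alpha}$ plus the volume of a Euclidean ball.
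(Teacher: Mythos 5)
Your upper bound and the general strategy (directly estimating the weighted integral by controlling $\log(|y|/\rho)$ on a good portion of the ball, treating the near-boundary regime separately via a measure-density/cone condition) match the paper. However, your ``easy'' case has a genuine gap that stems from not using what is in fact the paper's key auxiliary inequality.

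You propose to bound $|y|$ below by $\tfrac12(|x|+\sqrt t)$ on a fixed fraction of $B(x,\sqrt t)$ and then pass from $\log\frac{|x|+\sqrt t}{2\rho}+\beta$ to $c'\bigl(\log\frac{|x|+\sqrt t}{\rho}+\beta\bigr)$ with a universal $c'$. This step is false uniformly in $\beta$: writing $a=\log\frac{|x|+\sqrt t}{\rho}$, you would need $a-\log 2+\beta\ge c'(a+\beta)$, and letting $a\downarrow\log 2$ (which is allowed in your regime $|x|\ge 2\rho$, $\sqrt t$ small) and $\beta\downarrow 0$ forces $c'\to 0$; subtracting $\log 2$ inside the shifted logarithm cannot be absorbed into a $\beta$-independent multiplicative constant. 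In addition, in the sub-case $\sqrt t>|x|$, your set $\{|y|\ge\sqrt t\}$ does \emph{not} occupy a fixed fraction of $B(x,\sqrt t)$ — that fraction tends to zero as $|x|/\sqrt t\to 0$, since $B(x,\sqrt t)$ then essentially coincides with $B(0,\sqrt t)$ — so the crucial ``fixed proportion'' claim fails precisely where it matters most. Finally, splitting at $|x|\ge 2\rho$ does not ensure the chosen region avoids $K$: the in-radius $\rho$ says nothing about the circumradius, so the set $\{|y|\ge|x|\}$ may still meet $K$.

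The paper avoids all three pitfalls at once by (i) fixing $k_0$ with $K\subset B(0,k_0/2)$ and splitting at $|x|+\sqrt t>k_0$, which guarantees the relevant region lies in $M$; (ii) working on the circular segment of $B(x,\sqrt t)$ of height $\sqrt t/2$ pointing away from the origin, contained in $\{|y|\ge|x|+\tfrac12\sqrt t\}$ and always covering the fixed fraction $\bigl(\frac\pi3-\frac{\sqrt3}{4}\bigr)/\pi$ of the ball; and (iii) the elementary Bernoulli-type inequality
\begin{equation*}
\log\Bigl(\frac{|x|+\delta s}{\rho}\Bigr)\ \ge\ \delta\,\log\Bigl(\frac{|x|+s}{\rho}\Bigr),\qquad \delta\in(0,1),\ |x|\ge\rho,
\end{equation*}
which compares $\log\frac{|x|+\sqrt t/2}{\rho}$ to $\log\frac{|x|+\sqrt t}{\rho}$ multiplicatively \emph{without} the additive $\log 2$ loss, so that adding $\beta>0$ to both sides preserves the constant. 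Note that the relevant lower bound on $|y|$ here is $|x|+\tfrac12\sqrt t$, not $\tfrac12(|x|+\sqrt t)$; the former is what makes the Bernoulli trick applicable. Your near-boundary sketch is roughly in line with the paper's (which splits further into $x$ away from $\partial K$, handled by a Lebesgue-point argument, and $x$ near $\partial K$, handled by a cone condition from the $C^2$ boundary and again inequality (iii)), but without (iii) even that part of your argument would need repair. I would suggest proving and systematically using the Bernoulli-type inequality above; the rest of your outline then goes through.
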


\begin{proof}
The upper bound is obvious. To establish the lower bound we first prove the following auxiliary estimate:
\begin{equation} \label{eq-aux-2}
\forall \ \delta\in(0,1)\, :  \  \log\left(\frac{|x|+\delta s}{\rho}\right) \, \geq\,  \delta\, \log\left(\frac{|x|+ s}{\rho}\right) \qquad \forall \ x\in M, \ \forall\ s>0. 
\end{equation}
Indeed, since $|x|\geq \rho$ for all $x\in M$, by the Bernoulli inequality we have 
\begin{align*} 
\left(\frac{|x|+\delta s}{\rho}\right)^{\frac 1\delta} & = \left(\frac{|x|}{\rho}\right)^{\frac 1\delta}\, \left(1+\frac{\delta s}{|x|}\right)^{\frac 1\delta} \, \geq\, \frac{|x|}{\rho}\, \left(1+\frac{s}{|x|}\right) = \frac{|x|+s}{\rho}\, ,
\end{align*} 
which implies \eqref{eq-aux-2}. Next, since $K$ is bounded there exists $k_0>0$ such that
\begin{equation} \label{aux-1}
K \subset B\left(0, \frac{k_0}{2}\right) .
\end{equation}
Consider first the case when $|x|+\sqrt{t} > k_0$.  
Then there exists a circular segment $\B(x, \sqrt{t})$ of $B(x, \sqrt{t})$ with height $\frac{\sqrt{t}}{2}$ and such that 
\begin{equation} \label{aux-2}
\B(x, \sqrt{t}) \subset \left(B(x, \sqrt{t}) \setminus B\Big(0, |x|+\frac{\sqrt{t}}{2}\, \Big) \right) \subset M.
\end{equation}
The latter implies that
$$
w(y) \geq  \left(\log\left(\frac{|x|}{\rho} +\frac{\sqrt{t}}{2\rho} \right)+\beta\right)^{\alpha} \qquad \forall\ y \in \B(x, \sqrt{t}). 
$$
An elementary calculation now shows that 
\begin{align} \label{big-ball}
\V_2(x,\sqrt{t} )  & \geq \int_{\B(x, \sqrt{t})} \ w^2(y)\, dy  \, \geq \, \inf_{y\in \B(x, \sqrt{t})} w^2(y)\, \int_{\B(x, \sqrt{t})} \, dy \nonumber \\
& = t \left(\frac{\pi}{3}-\frac{\sqrt{3}}{4}\right)\,  \inf_{y\in \B(x, \sqrt{t})} w^2(y)\nonumber \\
& \geq t \left(\frac{\pi}{3}-\frac{\sqrt{3}}{4}\right)\, \left(\log\left(\frac{|x|}{\rho} +\frac{\sqrt{t}}{2\rho} \right)+\beta\right)^{2\alpha}\nonumber \\
& \geq  t \left(\frac{\pi}{3}-\frac{\sqrt{3}}{4}\right)\, 2^{-2\alpha}\, \left(\log\left(\frac{|x|+\sqrt{t}}{\rho} \right)+\beta\right)^{2\alpha}\, ,
\end{align}
where we have used \eqref{eq-aux-2}. 

 Assume now that $|x| + \sqrt{t} \leq k_0$. In this case we pick $\eps>0$ and distinguish two separate situations:

\smallskip

 \underline{1. $x\in M\setminus B(0, \rho+\eps)$}. For any fixed $x$ we consider the function 
$$
F_x(t) = t^{-1}\,  \int_{B(x, \sqrt{t})\cap M} \ \left(\log \frac{|y|}{\rho}\right)^{2\alpha}\, dy
$$
as a function of $t$ on the interval $(0, (k_0-|x|)^2]$. This function is obviously positive and continuous. On the other hand, $K$ satisfies exterior cone condition. By the Lebesgue property we thus have
$$
\liminf_{t\to 0} F_x(t) \, \geq\,  C\, \left(\log \frac{|x|}{\rho}\right)^{2\alpha}\, \geq \, C\, \left(\log \frac{\rho+\eps}{\rho}\right)^{2\alpha} \, ,
$$
with some constant $C>0$ independent of $x$. Hence 
$$
\inf_{0< t\leq (k_0-|x|)^2} \, F_x(t) \, \geq \, c_\eps\, \log\left(\frac{k_0}{\rho}\right)^{2\alpha}\, 
$$
for some $c_\eps>0$. It follows that 
\begin{equation} \label{lowerb-2}
\int_{B(x, \sqrt{t})\cap M} \ \left(\log \frac{|y|}{\rho}\right)^{2\alpha}\, dy \, \geq\, c_\eps\ t\, \log\left(\frac{k_0}{\rho}\right)^{2\alpha}\, \geq c_\eps\ t\, \log\left(\frac{|x|+\sqrt{t}}{\rho}\right)^{2\alpha}
\end{equation} 
holds for all $x\in M\setminus B(0, \rho+\eps)$ and all $t\in (0, (k_0-|x|)^2]$. 

\smallskip

 \underline{2. $x\in M\cap B(0, \rho+\eps)$}. From the fact that the curvature of $\partial K$ is bounded, by assumption, it follows that by taking  $\eps\leq \eps_0$, with $\eps_0$ small enough, we can ensure that there exists  $\gamma\in (0,\pi)$ and a constant $\delta_\gamma \in (0,1)$ such for any $x\in M\cap B(0, \rho+\eps)$ and $t$ small enough the ball $B(x, \sqrt{t})$ contains a circular section $B_\gamma(x, \sqrt{t})\subset M$ with the opening angle $\gamma$ which satisfies
$$
\forall\ y\in B_\gamma(x, \sqrt{t})\, : \ |y| \geq |x| + \delta_\gamma \, |x-y| .
$$
We then have 
\begin{align*}
 \int_{B(x, \sqrt{t})\cap M} \ \left(\log \frac{|y|}{\rho}\right)^{2\alpha}\, dy & \geq  \int_{B_\gamma(x, \sqrt{t})} \ \left(\log \frac{|y|}{\rho}\right)^{2\alpha}\, dy  \geq \frac{\gamma}{2}\, \int_0^{\sqrt{t}} \left(\log \frac{|x|+\delta_\gamma\, s}{\rho}\right)^{2\alpha}\, s\,  ds \\
 &
 \geq \frac{\gamma}{2}\, \int_{\sqrt{t}/2}^{\sqrt{t}} \left(\log \frac{|x|+\delta_\gamma\, s}{\rho}\right)^{2\alpha}\, s\,  ds\\  
 & \geq 
\frac{\gamma}{2}\,  \left(\log \frac{|x|+\frac{\delta_\gamma\, \sqrt{t}}{2}}{\rho}\right)^{2\alpha} \int_{\sqrt{t}/2}^{\sqrt{t}}\, s\,  ds  \geq C' \, t\, \log\left(\frac{|x|+\sqrt{t}}{\rho}\right)^{2\alpha}\, , 
\end{align*}
where we have used \eqref{eq-aux-2}. The constant $C'>0$ here depends only on $\eps_0$ and $\gamma$. Using the same reasoning as above we thus conclude that estimate \eqref{lowerb-2} holds, with a different constant, also for all $x\in M\cap B(0, \rho+\eps)$ and all $t\in (0, (k_0-|x|)^2]$.  Since $\frac 12 \leq \alpha\leq 1$, see equation \eqref{a-b}, this in combination with \eqref{big-ball} shows that
$$
 \V_2(x,\sqrt{t} ) \, \geq \, c_0\ t\, \log\left(\frac{|x|+\sqrt{t}}{\rho}+\beta\right)^{2\alpha}\, 
$$
for all $x\in M$ and  some $c_0$. 
\end{proof}

\noindent As a consequence of Lemma \ref{lem-aux} we obtain the volume doubling property of the manifold $(M, w^2 dx)$.

\begin{lemma} \label{lem-vd}
Let $w$ be given by \eqref{eq-w} with $\alpha, \beta>0$. Then 
$$
\V_2(x, 2r) \ \leq \ \frac{4^{\alpha+1} \pi}{c_0}\ \V_2(x,r) , \qquad \forall\ x\in M, \ \forall \ r>0,
$$
where $c_0$ is given by Lemma \ref{lem-aux} and $\V_2(x,\sqrt{t} )$ is defined in \eqref{volume}. 
\end{lemma}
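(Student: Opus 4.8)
The plan is to deduce the volume doubling property of $(M, w^2dx)$ directly from the two-sided bound on $\V_2(x,\sqrt t)$ established in Lemma~\ref{lem-aux}, treating $2r$ and $r$ as $\sqrt t$ with $t=(2r)^2$ and $t=r^2$ respectively. Concretely, I would first apply the upper bound in \eqref{V-lowerb} with $\sqrt t$ replaced by $2r$ to get
\[
\V_2(x,2r)\ \leq\ \pi\,(2r)^2\left(\log\frac{|x|+2r}{\rho}+\beta\right)^{2\alpha},
\]
and then apply the lower bound in \eqref{V-lowerb} with $\sqrt t$ replaced by $r$ to get
\[
\V_2(x,r)\ \geq\ c_0\, r^2\left(\log\frac{|x|+r}{\rho}+\beta\right)^{2\alpha}.
\]
Dividing, the factor $(2r)^2/r^2=4$ comes out immediately, and it remains to control the ratio of the two logarithmic factors.

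The key estimate is therefore the elementary comparison
\[
\log\frac{|x|+2r}{\rho}+\beta\ \leq\ 2\left(\log\frac{|x|+r}{\rho}+\beta\right)
\]
for all $x\in M$, $r>0$, $\beta>0$. This holds because $|x|+2r\le 2(|x|+r)$ (using $|x|\ge\rho>0$, or even just $|x|\ge 0$), hence
\[
\log\frac{|x|+2r}{\rho}\ \leq\ \log\frac{2(|x|+r)}{\rho}\ =\ \log 2+\log\frac{|x|+r}{\rho}\ \leq\ \log\frac{|x|+r}{\rho}+\log\frac{|x|+r}{\rho},
\]
where the last step uses $|x|+r\ge\rho$ so that $\log\frac{|x|+r}{\rho}\ge 0\ge\log 2-\log\frac{|x|+r}{\rho}$... in fact more simply $\log 2\le\log\frac{|x|+r}{\rho}+\beta$ fails in general, so one instead writes $\log\frac{|x|+2r}{\rho}+\beta\le \log\frac{|x|+r}{\rho}+\log 2+\beta\le 2(\log\frac{|x|+r}{\rho}+\beta)$ provided $\log\frac{|x|+r}{\rho}+\beta\ge\log 2$, and otherwise the whole quantity is already $\le 2\log 2\le 2(\log\frac{|x|+r}{\rho}+\beta)$ trivially. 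Raising this to the power $2\alpha$ and using $2^{2\alpha}\le 4^\alpha$ (valid since $2\alpha\le 2\alpha$, i.e.\ just rewriting), the logarithmic factors contribute at most $4^\alpha$.

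Putting the pieces together yields
\[
\frac{\V_2(x,2r)}{\V_2(x,r)}\ \leq\ \frac{\pi\cdot 4\cdot 4^{\alpha}\, r^2\left(\log\frac{|x|+r}{\rho}+\beta\right)^{2\alpha}}{c_0\, r^2\left(\log\frac{|x|+r}{\rho}+\beta\right)^{2\alpha}}\ =\ \frac{4^{\alpha+1}\pi}{c_0},
\]
which is exactly the claimed bound. I do not anticipate a genuine obstacle here: the only point requiring a little care is the logarithmic comparison, where one must handle the regime $\log\frac{|x|+r}{\rho}+\beta<\log 2$ separately (there the bound is trivial) from the regime $\log\frac{|x|+r}{\rho}+\beta\ge\log 2$ (there $\log 2$ is absorbed). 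Everything else is a direct substitution into \eqref{V-lowerb}, so the proof is short.
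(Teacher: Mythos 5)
Your overall strategy is the right one, and matches the paper's: apply the upper bound from Lemma~\ref{lem-aux} at radius $2r$, the lower bound at radius $r$, divide, and control the ratio of logarithmic factors. But the way you control that ratio has a genuine gap.

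You obtain $\log\frac{|x|+2r}{\rho}\le \log\frac{|x|+r}{\rho}+\log 2$ from $|x|+2r\le 2(|x|+r)$, and then need to absorb $\log 2$. You correctly note that $\log 2\le \log\frac{|x|+r}{\rho}+\beta$ can fail (e.g.\ $|x|$ close to $\rho$, $r$ small, $\beta$ small), but your fallback for that case is circular: you claim that then
\[
\log\tfrac{|x|+2r}{\rho}+\beta\ \le\ 2\log 2\ \le\ 2\bigl(\log\tfrac{|x|+r}{\rho}+\beta\bigr)
\]
``trivially,'' yet the second inequality $2\log 2\le 2(\log\frac{|x|+r}{\rho}+\beta)$ is \emph{equivalent} to $\log 2\le \log\frac{|x|+r}{\rho}+\beta$, which is exactly what you assumed to be false in this branch. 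So the ``otherwise'' case is not actually handled, and the argument does not close when $\log\frac{|x|+r}{\rho}+\beta<\log 2$.

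The issue is that $|x|+2r\le 2(|x|+r)$ is the wrong elementary inequality: it gives an \emph{additive} $\log 2$ that cannot always be absorbed. The paper instead uses the auxiliary estimate \eqref{eq-aux-2}, proved in Lemma~\ref{lem-aux} via Bernoulli's inequality and the fact that $|x|\ge\rho$ on $M$. Taking $\delta=\tfrac12$ and $s=2r$ there yields the \emph{multiplicative} bound
\[
\log\tfrac{|x|+2r}{\rho}\ \le\ 2\,\log\tfrac{|x|+r}{\rho},
\]
with no spurious constant. Adding $\beta>0$ then gives $\log\frac{|x|+2r}{\rho}+\beta\le 2(\log\frac{|x|+r}{\rho}+\beta)$ uniformly, with no case distinction, and the rest of your computation (the factor $4$ from $(2r)^2/r^2$, the factor $2^{2\alpha}=4^\alpha$ from the log ratio, and $\pi/c_0$ from the two-sided bound) goes through unchanged. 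Replacing your logarithmic comparison with \eqref{eq-aux-2} repairs the proof.
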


\begin{proof}
The claim follows by choosing $\delta= \frac 12$ in \eqref{eq-aux-2}. 
\end{proof}

\section{}\label{sec:sobolev} 
 Let  $w$ be given by \eqref{eq-w} with $\alpha, \beta >0$ and let $L^p(M,w^2\, dx), p>1,$ be the space of functions $f$ such that 
$$
\|f\|_{p,w}^p := \int_M |f(x)|^p\, w^2(x)\, dx < \infty. 
$$ 
We have 
\begin{lemma} \label{lem-sobolev} For any $q\in (2,\infty)$ there exists a constant $C_q$ such that 
\begin{equation} \label{eq-sob-w}
\|f\|_{q,w} \ \leq\ C_q \, \|f\|_{H^1(M,w^2\, dx)} \qquad \forall\, f\in H^1(M,w^2\, dx), 
\end{equation}
where
$$
 \|f\|_{H^1(M,w^2\, dx)} := \left( \int_M (|\nabla f|^2+|f|^2)\, w^2\, dx\right)^{\frac 12}\, .
$$
\end{lemma}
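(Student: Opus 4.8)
The plan is to reduce the weighted Sobolev inequality \eqref{eq-sob-w} to the unweighted Sobolev embedding $H^1(M)\hookrightarrow L^q(M)$ already recorded in \eqref{imbed}, by absorbing the weight $w$ into the function. Set $g:=w\,f$; then $\|f\|_{q,w}^q=\int_M |f|^q w^2\,dx=\int_M |g|^q w^{2-q}\,dx$. Since $q>2$ and $w\geq \beta^\alpha>0$ is bounded below on $M$, we have $w^{2-q}\leq \beta^{\alpha(2-q)}$, hence $\|f\|_{q,w}\leq \beta^{\alpha(2-q)/q}\,\|g\|_{L^q(M)}$. Now apply the unweighted embedding \eqref{imbed} to $g$ to get $\|g\|_{L^q(M)}\leq C_q\|g\|_{H^1(M)}$, and it remains to bound $\|g\|_{H^1(M)}=\|w f\|_{H^1(M)}$ in terms of $\|f\|_{H^1(M,w^2dx)}$.

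The second step is therefore to control $\nabla g=\nabla(wf)=f\,\nabla w+w\,\nabla f$ and the zeroth-order term $wf$. The term $w\nabla f$ contributes exactly $\|\,|\nabla f|\,w\|_{L^2(M)}\leq \|f\|_{H^1(M,w^2dx)}$, and $wf$ contributes $\|fw\|_{L^2(M)}\leq\|f\|_{H^1(M,w^2dx)}$. The only delicate term is $f\,\nabla w$. A direct computation from \eqref{eq-w} gives $\nabla w(x)=\alpha\left(\log\frac{|x|}{\rho}+\beta\right)^{\alpha-1}\frac{x}{|x|^2}$, so that $|\nabla w(x)|=\frac{\alpha}{|x|}\left(\log\frac{|x|}{\rho}+\beta\right)^{\alpha-1}$. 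On $M$ one has $|x|\geq\rho$ and $\log\frac{|x|}{\rho}+\beta\geq\beta$, so $|\nabla w(x)|\leq \frac{\alpha}{\rho\,\beta}\left(\log\frac{|x|}{\rho}+\beta\right)^{\alpha}=\frac{\alpha}{\rho\,\beta}\,w(x)$; that is, $|\nabla w|\leq C_{\alpha,\beta,\rho}\,w$ pointwise. Consequently $\|f\,\nabla w\|_{L^2(M)}\leq C_{\alpha,\beta,\rho}\,\|f\,w\|_{L^2(M)}\leq C_{\alpha,\beta,\rho}\,\|f\|_{H^1(M,w^2dx)}$. Collecting the three contributions yields $\|g\|_{H^1(M)}\leq C\,\|f\|_{H^1(M,w^2dx)}$ with a constant depending only on $\alpha,\beta,\rho$, and combining with the previous step finishes the proof.

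I expect the only real point to verify carefully is the pointwise bound $|\nabla w|\lesssim w$ on $M$, which is where the hypotheses $\alpha,\beta>0$ and the geometric fact $|x|\geq\rho$ on $M$ (cf.~\eqref{inner-ball}) enter; everything else is the standard product rule plus the elementary inequality $w^{2-q}\leq\beta^{\alpha(2-q)}$ valid because $q>2$. No subtlety arises at the boundary $\partial M$ since $w$ and $\nabla w$ are smooth and bounded there and the trace term does not appear in \eqref{eq-sob-w}. One should also note that the argument implicitly uses that $f\in H^1(M,w^2dx)$ implies $g=wf\in H^1(M)$, which is exactly what the computation above establishes, so the chain $H^1(M,w^2dx)\to H^1(M)\xrightarrow{\eqref{imbed}}L^q(M)\to L^q(M,w^2dx)$ is legitimate.
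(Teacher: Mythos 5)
Your proof is correct and uses essentially the same strategy as the paper: absorb the weight into the function via a substitution, invoke the unweighted Sobolev embedding $H^1(M)\hookrightarrow L^q(M)$, and control the extra gradient term using the two facts that $w$ is bounded below on $M$ and that $|\nabla w|/w$ is bounded. The only cosmetic difference is the choice of substitution: the paper sets $u=f\,w^{2/q}$ so that $\|f\|_{q,w}=\|u\|_{L^q(M)}$ holds exactly, whereas you set $g=wf$ (the natural isometry $L^2(M,w^2dx)\to L^2(M)$), which costs one extra application of $w\ge\beta^\alpha$ to compare the $L^q$ norms—both work for the same reasons.
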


\begin{proof} 
Let $f\in H^1(M,w^2\, dx)$ and set $u: = f\, w^{\frac 2q}$. Note that in view of \eqref{eq-w} 
\begin{equation} \label{w-aux}
 \inf_{x\in M} w(x)  > 0\, , \qquad \sup_{x\in M} \frac{|\nabla w(x)|}{w(x)} \, < \infty\, .
\end{equation} 
Since $2q >4$ by assumption, equation \eqref{w-aux} implies that for some $c_0$ there holds
$$
\int_M |f|^2\, w^{\frac 4q}\, dx \, \leq\,  c_0\,  \|f\|_{2,w}^2\, , \qquad \int_M |\nabla f|^2\, w^{\frac 4q}\, dx\, \leq\, c_0\,  \|\nabla f\|_{2,w}^2\, .
$$
Hence by H\"older's inequality we get 
\begin{align*}
\int_M |\nabla u|^2\, dx & = \int_M \left( |\nabla f|^2\,  w^{\frac 4q}  + \frac 4q\, f w^{\frac 4q -1}\, \nabla f\cdot\nabla w + \frac{4}{q^2} |f|^2\, w^{\frac 4q-2}\, |\nabla w|^2 \right)\, dx \nonumber \\
& \leq c_0\,  \|\nabla f\|_{2,w}^2 + \frac{4 c_0}{q}\, \left\|\frac{\nabla w}{w}\right\|_{L^\infty(M)}\,   \| f\|_{2,w} \,  \|\nabla f\|_{2,w}  + \frac{4 c_0}{q^2}\, \left\|\frac{\nabla w}{w}\right\|^2_{L^\infty(M)}\,   \| f\|^2_{2,w} \nonumber \\
& \leq c_1 ( \| f\|_{2,w}^2+ \|\nabla f\|_{2,w}^2)\, . 
\end{align*}
for some $c_1$. It follows that $u\in H^1(M, dx)$ and 
\begin{equation} \label{h1-u}
\|u \|_{H^1(M,dx)}^2 \, \leq \, (c_1+c_0)\,  \|f\|^2_{H^1(M,w^2\, dx)}\, .
\end{equation}
On the other hand, the standard Sobolev imbedding theorem, see e.g.~\cite[Thm.~4.12]{ad}, says that there exists some $K_{q}$ such that 
$$
 \|u\|_{L^q(M,dx)} \, \leq \, K_q\, \|u \|_{H^1(M,dx)}\, .
$$
Since  $\|f\|_{q,w}  =  \|u\|_{L^q(M,dx)} $, the claim follows from \eqref{h1-u}.
\end{proof}

\section*{\bf Acknowledgements}
H.~K. was supported by the Gruppo Nazionale per Analisi Matematica, la Probabilit\`a e le loro Applicazioni (GNAMPA) of the Istituto Nazionale di Alta Matematica (INdAM). The support of MIUR-PRIN2010-11 grant for the project  ``Calcolo delle variazioni'' (H.~K.) is also gratefully acknowledged.

\end{document}